\theoremstyle{plain}
\def\endproof{\hspace*{\fill}\mbox{\ \rule{.1in}{.1in}}\medskip }
\newtheorem{theorem}{Theorem}[section]
\newtheorem{lemma}[theorem]{Lemma}
\newtheorem{proposition}[theorem]{Proposition}
\theoremstyle{definition}
\newcommand{\R}{\mathbb{R}}
\renewcommand{\d}{\partial}
\renewcommand{\div}{{\rm div}\,}
\numberwithin{equation}{section}
\begin{document}
\title[stress-assisted diffusion]
{A local and global well-posedness results for \\ the general stress-assisted
  diffusion systems}
\author{Marta Lewicka}
\address{Marta Lewicka,  University of Pittsburgh, Department of Mathematics, 
301 Thackeray Hall, Pittsburgh, PA 15260, USA }
\email{lewicka@pitt.edu}
\author{Piotr B. Mucha}
\address{Piotr B. Mucha, Institute of Applied Mathematics and Mechanics, 
 University of Warsaw, ul. Banacha 2, 02097 Warszawa, Poland}
\email{p.mucha@mimuw.edu.pl}
\date{\today}

\begin{abstract}
We prove the local and global in time existence of the classical
solutions to two general classes of the stress-assisted diffusion systems. Our results
are applicable in the context of the
non-Euclidean elasticity and liquid crystal elastomers.
\end{abstract}

\maketitle
%\tableofcontents

\section{Introduction and the main results.}

There are a number of phenomena where inhomogeneous and incompatible
pre-strain is observed in $3$-dimensional bodies. Growing leaves, gels subjected to
differential swelling, electrodes in electrochemical cells, edges of torn plastic sheets are but a
few examples \cite{12a, 23a, 24a, 28a}. It has also been recently suggested that such
incompatible pre-strains may be exploited as means of actuation of 
micro-mechanical devices \cite{18, 19}. The mathematical foundations
for these theories has lagged behind but has recently been the focus
of much attention. While the static theory involving thin structures
such as pre-strained plates and shells is now reasonably well
understood \cite{13a, 3a, lepa, BLS, lemapa2}, leading to the variationally reduced models constrained to
appropriate types of isometries \cite{lepa, BLS, 4a}, and requiring bringing
together the differential geometry of surfaces with the theory of elasticity appropriately
modified \cite{lemapa1, LOP, 13a, 14a}, the parallel evolutionary PDE model seems to not have been
considered in this context.

\medskip

\subsection{The model and the main results.}

In this paper, we are concerned with two systems of coupled PDEs in the
description of stress-assisted diffusion. The first system:
\begin{equation}\label{maineq}
\left\{\begin{split}
& u_{tt} - \mbox{div}\Big(\partial_F W(\phi, \nabla u)\Big) = 0 \\
& \phi_t = \Delta \Big(\partial_\phi W(\phi,\nabla u)\Big).
\end{split}\right.
\end{equation}
consists of a balance of linear momentum in the deformation field 
$u:\mathbb{R}^3\times\mathbb{R_+}\to \mathbb{R}^3$, and the diffusion
law of the scalar field $\phi:\mathbb{R}^3\times\mathbb{R_+}\to 
\mathbb{R}$ representing the inhomogeneity factor in the elastic
energy density $W$.
The field $\phi$ may be interpreted as
% the density of the growth hormon in natural tissues, 
the local swelling/shrinkage rate in morphoge\-ne\-sis at polymerization, or the localized
conformation in liquid crystal elastomers. 

The second system is a quasi-static approximation of
(\ref{maineq}), in which we neglect the material inertia $u_{tt}$,
consistent with the assumption that 
%(due to the fact that the time scale of the top equation is much faster than the diffusion time scale)
the diffusion time scale is much larger than the time scale of elastic wave propagation:
\begin{equation}\label{maineq2}
\left\{\begin{split}
& - \mbox{div}\Big(\partial_F W(\phi, \nabla u)\Big) = 0 \\
& \phi_s = \Delta \Big(\partial_\phi W(\phi,\nabla u)\Big).
\end{split}\right.
\end{equation}

In both systems, the deformation $u$ induces the deformation gradient,
and the velocity and velocity gradients, respectively denoted as:
$$F = \nabla u \in\mathbb{R}^{3\times 3}, \quad
v=\xi_t\in\mathbb{R}^3, \quad Q=\nabla\xi_t = \nabla v =
F_t\in\mathbb{R}^{3\times 3}.$$
We will be concerned with the local in time well-posedness of
the classical solutions to (\ref{maineq}), and the global
well-posedness of (\ref{maineq2}), subject to the (subset of) initial data:
\begin{equation}\label{initial1}
u(0,\cdot) = u_0, \quad u_t(0,\cdot)=u_1 \quad \mbox{ in } 
\mathbb{R}^3,
\end{equation}
\begin{equation}\label{initial2}
\phi(0,\cdot) = \phi_0 \quad \mbox{ in } \mathbb{R}^3,
\end{equation}
and the non-interpenetration ansatz: 
\begin{equation}\label{nonin}
\det \nabla u > 0 \quad \mbox{ in } \mathbb{R}^3.
\end{equation}

The main results of this paper are the following:
\begin{theorem}\label{th1}
Let $u_0 - \mathrm{id} \in H^4(\R^3)$, $u_1 \in H^3(\R^3)$ and
$\phi_0\in H^3(\R^3)$.  Assume that $W$ is as in subsection \ref{ener}.
Fix $T>0$, and assume that the following quantities:
\begin{equation}\label{small}
\|u_1,\nabla u_0 - \mathrm{Id}_3,\phi_0\|_{H^3}^2 + \|u_0 - \mathrm{id}\|_{L^2}^2 +
\int_{\R^3} W(\phi_0,\nabla u_0)~\mathrm{d}x 
\end{equation}
are sufficiently small in comparison with $T$, and with the constant
$\gamma$ in (\ref{ass}). Then there exists a unique solution $(u, \phi)$
of the problem (\ref{maineq}) (\ref{initial1} - \ref{nonin}), defined on the time
interval $[0,T]$, and such that:
\begin{equation*}
\begin{split} 
& u-\mathrm{id} \in L^\infty(0,T;H^4(\mathbb{R}^3)), \quad u_{tt}\in
L^\infty(0,T;H^2(\R^3)), \\
& \phi \in L^\infty(0,T;H^3(\mathbb{R}^3)) \mbox{ \ \ and \ \ } \phi_t \in
L^2(0,T;H^2(\mathbb{R}^3)).
\end{split}
\end{equation*}
\end{theorem}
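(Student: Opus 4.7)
The plan is to construct $(u,\phi)$ as the limit of a sequence of approximating solutions obtained via a decoupling iteration in time. Starting from an initial guess $(u^0,\phi^0) = (u_0 + tu_1,\phi_0)$, I would define $(u^{n+1},\phi^{n+1})$ by solving in sequence the quasilinear hyperbolic system
$$
u^{n+1}_{tt} - \div\bigl(\d_F W(\phi^n, \nabla u^{n+1})\bigr) = 0,\qquad u^{n+1}(0,\cdot)=u_0,~u^{n+1}_t(0,\cdot)=u_1,
$$
followed by the linear parabolic equation
$$
\phi^{n+1}_t = \Delta\bigl(\d_\phi W(\phi^{n+1},\nabla u^{n+1})\bigr),\qquad \phi^{n+1}(0,\cdot)=\phi_0.
$$
The solvability of each linearized step is classical provided $\d_F^2 W(\phi^n,\nabla u^n)$ satisfies the Legendre--Hadamard condition and the scalar symbol multiplying $\Delta$ remains uniformly positive; both hold in a neighbourhood of $(\phi\equiv 0,\nabla u\equiv \mathrm{Id}_3)$ by the structural assumptions on $W$ in Subsection \ref{ener}, as long as $\|\nabla u^n-\mathrm{Id}_3\|_{L^\infty}$ stays small, which by Sobolev embedding is controlled by the $H^3$ norm.

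The heart of the argument is deriving uniform-in-$n$ a priori estimates on $[0,T]$. For the hyperbolic step I would apply $\d^\alpha$ with $|\alpha|\leq 3$ in space, as well as one or two time derivatives, to the equation, multiply by the corresponding derivative of $u^{n+1}_t$, integrate over $\R^3$, and use Legendre--Hadamard coercivity of $\d_F^2 W$ together with Moser-type tame product estimates to handle commutators of $\d^\alpha$ with the nonlinearity. After integrating in time and applying Gronwall's lemma, this controls $\|u^{n+1}-\mathrm{id}\|_{L^\infty_tH^4} + \|u^{n+1}_t\|_{L^\infty_t H^3} + \|u^{n+1}_{tt}\|_{L^\infty_tH^2}$ in terms of the data (\ref{small}) and the running norm of $\phi^n$. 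The term $\int W(\phi_0,\nabla u_0)\,\mathrm dx$ enters as the natural elastic energy at $t=0$. For the parabolic step, maximal $L^2$-regularity applied after differentiating $\d^\alpha$ with $|\alpha|\leq 3$ produces the bound $\|\phi^{n+1}\|_{L^\infty_tH^3}+\|\phi^{n+1}_t\|_{L^2_tH^2}$ in terms of $\|\phi_0\|_{H^3}$ and norms of $\nabla u^{n+1}$ appearing inside $\d_\phi W$.

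Combining the two sets of estimates and exploiting the smallness of (\ref{small}) with the constant $\gamma$ from (\ref{ass}), I would show that the iteration preserves a small closed ball $\mathcal B_R$ in the space $X_T$ defined by the regularities listed in the theorem, provided $T$ is not too large relative to $R$. A parallel contraction estimate, obtained by subtracting the equations satisfied by $(u^{n+1},\phi^{n+1})$ and $(u^n,\phi^n)$ and carrying out the same energy method at a lower level of regularity (say $H^2$ for $u$ and $H^1$ for $\phi$, exploiting that the top-order norms are only needed to absorb commutators), yields convergence of the sequence in that weaker topology. Weak-$\star$ compactness of $\mathcal B_R$ then gives convergence in the strong topology of $X_T$, and passage to the limit in the nonlinear terms is standard. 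Uniqueness and continuity in the data follow from the same contraction bound, and the non-interpenetration constraint $\det\nabla u>0$ is preserved by continuity from $\det\nabla u_0>0$ and the smallness of $\|\nabla u-\mathrm{Id}_3\|_{L^\infty}$.

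The main obstacle I anticipate is closing the estimates at the highest regularity level. The wave equation does not smooth, so every spatial derivative that falls on $\phi^n$ inside the coefficient $\d_F W(\phi^n,\nabla u^{n+1})$ must be balanced against the limited parabolic regularity of $\phi$ (which lies in $L^\infty_t H^3$ only). Conversely, the coefficient $\d_\phi W(\phi,\nabla u)$ in the parabolic equation contains $\nabla u$, which is accessible only in $L^\infty_t H^3$, so one derivative in the $\Delta$ of the right-hand side can be absorbed into $\phi$ by integration by parts, but the remaining one must be tamed by Moser estimates using the smallness of $\|\nabla u-\mathrm{Id}_3\|_{H^3}$. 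The delicate accounting of where each derivative lands, together with the need to keep $\d_F^2 W$ coercive via the smallness assumption, is what the whole argument revolves around, and is precisely why (\ref{small}) is required to be small relative to $T$ and $\gamma$.
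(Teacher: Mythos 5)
Your overall architecture (approximation scheme, uniform a priori bounds, contraction at a lower regularity level) is in the spirit of the paper's, but the specific iteration you propose --- freezing $\phi^n$ in the elastic equation and then solving the parabolic equation with the new $u^{n+1}$ --- destroys exactly the structural feature that makes the a priori estimate closable. The difficulty you flag in your final paragraph is not a matter of ``delicate accounting'' that Moser estimates and smallness can absorb: it is a genuine loss of one derivative. Concretely, after applying $\partial_{x_i}\partial_{x_j}\partial_{x_k}$ to the elastic equation and testing with $u_{x_i,x_j,x_k,t}$, the coupling term $\div\big(\partial_F\partial_\phi W\,\phi_{x_i,x_j,x_k}\big)$ produces, after the unavoidable integration by parts in $t$, the term
\begin{equation*}
\int_{\R^3}\phi_{x_i,x_j,x_k,t}\,\big\langle(\partial_F\partial_\phi W):\nabla u_{x_i,x_j,x_k}\big\rangle~\mathrm{d}x,
\end{equation*}
which involves $\nabla^3\phi_t$, not controlled by $\phi_t\in L^2(0,T;H^2)$; the alternative of leaving the time derivative on $u$ produces $\nabla^4u_t$, also uncontrolled. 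Symmetrically, the $H^3$ estimate for the parabolic equation alone cannot be closed: the forcing $\Delta\big(\partial_\phi\partial_F W:\nabla u_{x_i,x_j,x_k}\big)$ tested against $(-\Delta)^{-1}\phi_{x_i,x_j,x_k,t}$ yields precisely the same integral. The whole point of the paper's Lemma \ref{lemapriori} (see the boxed terms in (\ref{dodici}) and (\ref{dicianove})) is that these two uncontrollable terms are \emph{identical} and cancel upon subtraction --- but only when the same pair $(\phi,u)$ appears in both equations. In your scheme the elastic step carries $\phi^n$ while the parabolic step carries $\phi^{n+1}$, so the cancellation is lost and the uniform-in-$n$ bound in the stated class cannot be obtained.

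Related to this, the coercivity you invoke is the wrong one: the energy that closes is not $\partial_F^2W:(\nabla^4u)^{\otimes2}$ for the wave part plus, separately, $\partial_\phi^2W\,(\nabla^3\phi)^2$ for the diffusion part, but the full joint Hessian $D^2W:(\nabla^3\phi,\nabla^4u)^{\otimes2}$, whose cross term is exactly the coupling term placed under the time derivative; assumption (\ref{ass}) is stated as joint positive definiteness (combined with Korn's inequality) for precisely this reason. The paper therefore keeps the system coupled throughout and instead regularizes by adding $-\epsilon\Delta u$ and discretizing by Galerkin, with the weighted inner products (\ref{e2a}) chosen so that the finite-dimensional projections reproduce the coupled energy identity --- and hence the cancellation --- exactly. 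To repair your argument you would need to iterate on the coupled system (or adopt the paper's Galerkin/regularization route) rather than decouple the two equations.
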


\begin{theorem}\label{th2} 
Let $\phi_0 \in H^2(\mathbb{R}^3)$ and assume that $W$ is as in
subsection \ref{ener}.
Assume that $\|\phi_0\|_{H^2}$ is sufficiently small.
Then there exists a unique global in time solution $(u, \phi)$ to
(\ref{maineq2}) (\ref{initial2}) (\ref{nonin}) such that:
\begin{equation*}
\begin{split}
& u-\mathrm{id} \in L^\infty(\mathbb{R}_+;L^6(\mathbb{R}^3)),
\quad \nabla^2 u \in L^2(\mathbb{R}_+;H^2(\mathbb{R}^3)), \\
& \phi \in L^\infty(\mathbb{R}_+; H^2(\mathbb{R}^3)) \mbox{\ \ and \ \ } \nabla
\phi\in L^2(\mathbb{R}_+; H^2(\mathbb{R}^3)).
\end{split}
\end{equation*}
\end{theorem}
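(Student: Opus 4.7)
The plan is to exploit the quasi-static nature of (\ref{maineq2}): the elliptic first equation determines $u$ from $\phi$ at each instant, reducing the coupled system to a single nonlinear heat-type equation for $\phi$, which can then be analysed as a small perturbation of $\phi_t-\alpha\Delta\phi=0$. The argument proceeds in three steps.

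\emph{Step 1: elliptic inversion.} I linearise the first equation of (\ref{maineq2}) at the equilibrium $(\phi,\nabla u)=(0,\mathrm{Id}_3)$ and rewrite it as
\[
-\div\bigl(\mathcal A\,\nabla(u-\mathrm{id})\bigr) \;=\; \div\bigl(\mathcal F(\phi,\nabla u-\mathrm{Id}_3)\bigr),
\]
with $\mathcal A=\d_F^2 W(0,\mathrm{Id}_3)$ satisfying the Legendre--Hadamard ellipticity postulated in subsection \ref{ener}, and $\mathcal F$ collecting all linear-in-$\phi$ and higher-order nonlinear contributions (vanishing at the origin). Elliptic $L^p$ theory on $\R^3$, combined with a Banach contraction in $\nabla u$, yields for sufficiently small $\|\phi\|_{H^2}$ a unique $u$ with
\[
\|u-\mathrm{id}\|_{L^6(\R^3)} + \|\nabla^2 u\|_{H^2(\R^3)} \;\leq\; C\|\phi\|_{H^2(\R^3)},
\]
and the embedding $H^3\hookrightarrow L^\infty$ keeps $\nabla u$ close to $\mathrm{Id}_3$, so that (\ref{nonin}) propagates automatically.

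\emph{Step 2: parabolic a priori bound.} Writing $\d_\phi W(\phi,\nabla u)=\alpha\phi+R(\phi,\nabla u)$ with $\alpha=\d_\phi^2 W(0,\mathrm{Id}_3)>0$ and $R$ at least quadratic at the equilibrium, the second equation becomes $\phi_t-\alpha\Delta\phi=\Delta R(\phi,\nabla u)$. Testing against $\phi$ and its Laplacians, using the $H^2$ Sobolev algebra and the bound from Step 1 to control $\nabla u-\mathrm{Id}_3$ and $\nabla^2 u$ by $\|\phi\|_{H^2}$, I obtain an energy identity of the form
\[
\frac{d}{dt}\|\phi\|_{H^2}^2 + c\|\nabla\phi\|_{H^2}^2 \;\leq\; C\|\phi\|_{H^2}\,\|\nabla\phi\|_{H^2}^2.
\]
A continuity/bootstrap argument based on smallness of $\|\phi_0\|_{H^2}$ absorbs the right-hand side into the dissipation and delivers the global estimate
\[
\sup_{t\geq 0}\|\phi(t)\|_{H^2}^2 + \int_0^{\infty}\|\nabla\phi(t)\|_{H^2}^2\,dt \;\leq\; C\|\phi_0\|_{H^2}^2,
\]
which reproduces exactly the function spaces announced in the theorem.

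\emph{Step 3: existence and uniqueness.} I construct $(u,\phi)$ by a Picard iteration: given $\phi^{(n)}$ in the dissipative ball supplied by Step 2, I invoke Step 1 to produce $u^{(n)}$, and then solve the linear heat equation $\phi_t^{(n+1)}-\alpha\Delta\phi^{(n+1)}=\Delta R(\phi^{(n)},\nabla u^{(n)})$ for $\phi^{(n+1)}$. Step 2 keeps the ball invariant; a difference estimate in the weaker norm $L^\infty_tL^2\cap L^2_tH^1$, where quadratic remainders close, yields a contraction and hence convergence to the global solution. Uniqueness follows from the analogous estimate applied to the difference of two solutions. The principal obstacle is precisely the global closure of Step 2: on $\R^3$ the equation has no zero-order damping, so every nonlinear term in $\Delta R(\phi,\nabla u)$ must be estimated by a factor involving $\|\nabla\phi\|_{H^2}$ (not merely $\|\phi\|_{H^2}$), which requires carefully exploiting that $\nabla u-\mathrm{Id}_3$ and $\nabla^2 u$ gain, via Step 1, the extra derivative needed to match the parabolic dissipation.
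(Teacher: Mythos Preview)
Your decoupling strategy is appealing, but Step 2 contains a genuine gap. You claim that $R(\phi,\nabla u)=\partial_\phi W(\phi,\nabla u)-\alpha\phi$ is ``at least quadratic at the equilibrium''. This is false: while $R(0,\mathrm{Id})=0$ and $\partial_\phi R(0,\mathrm{Id})=0$, one has $\partial_F R(0,\mathrm{Id})=\partial_\phi\partial_F W(0,\mathrm{Id})=:\beta$, and nothing in subsection~\ref{ener} forces $\beta=0$ (for the examples (\ref{exa}) one computes $\beta\sim D^2W_0(\mathrm{Id}):B'(0)$, generically nonzero). Hence $R$ carries a \emph{linear} term $\beta:(\nabla u-\mathrm{Id})$. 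After your elliptic inversion this becomes linear in $\phi$, so when you test $\phi_t-\alpha\Delta\phi=\Delta R$ against $\phi$ it contributes $|\beta|\,\|\nabla^2 u\|_{L^2}\|\nabla\phi\|_{L^2}\sim C|\beta|\,\|\nabla\phi\|_{L^2}^2$, of the \emph{same order} as the dissipation $\alpha\|\nabla\phi\|_{L^2}^2$. No smallness factor is available to absorb it, so your announced inequality $\frac{d}{dt}\|\phi\|_{H^2}^2+c\|\nabla\phi\|_{H^2}^2\le C\|\phi\|_{H^2}\|\nabla\phi\|_{H^2}^2$ does not follow.

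The repair requires the full Hessian hypothesis (\ref{ass}), not merely $\alpha>0$. Two routes work. (i) Keep the decoupling but acknowledge that the effective linearised operator on $\phi$ has symbol $|\xi|^2\bigl(\alpha-b(\xi)^T M(\xi)^{-1}b(\xi)\bigr)$, with $M(\xi)$ the acoustic tensor of $\partial_F^2W(0,\mathrm{Id})$ and $b(\xi)=\beta\xi$; this Schur complement is bounded below by $\gamma$ thanks to (\ref{ass}), so the true linearisation is uniformly parabolic and the energy closes. (ii) Abandon the decoupling at the level of the energy estimate: differentiate both equations of (\ref{maineq2}) in $x_i$, test the elliptic one by $u_{x_i}$ and the parabolic one by $(-\Delta)^{-1}\phi_{x_i}$, and \emph{add}. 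The cross terms $\langle\partial_\phi\partial_F W:\nabla u_{x_i},\phi_{x_i}\rangle$ then combine with the diagonal pieces into the full form $D^2W(\phi,\nabla u):(\phi_{x_i},\nabla u_{x_i})^{\otimes 2}$, coercive by (\ref{ass}) and Korn's inequality. This is the paper's approach; it sidesteps the pseudodifferential Schur computation and delivers the dissipation in one line.
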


\medskip

The proof of Theorem \ref{th1} relies on controlling the energy:
$$ \int_{\R^3} \frac 12 |u_t|^2 +  W(\phi,\nabla u) ~\mbox{d}x, $$
where the hyperbolic character of the first equation in (\ref{maineq})
suggests to seek the a-priori bounds on higher norms of $u$ and
$\phi$ by the standard energy techniques. A detailed analysis reveals
that the special structure of coupling in the stress-assisted diffusion system indeed allows for cancellation of
those terms that otherwise prevent closing the bounds in each of the
two equations in (\ref{maineq}) alone. These terms are displayed in
formulas (\ref{dodici}) and (\ref{dicianove}) in the proof of Lemma
\ref{lemapriori}. Existence of solutions in Theorem \ref{th1} is then
shown via Galerkin's method, where we check that solutions to all
appropriate $\epsilon$-approximations of the original system
(\ref{maineq}) still enjoy the same a-priori bounds in Theorem
\ref{thap}. This is carried out in section \ref{sec3}, while
uniqueness of solutions is proved in section \ref{sec4}.

The proof of Theorem \ref{th2}, given in section \ref{sec5},
is based on the $L^2$-approach as well. The system (\ref{maineq2})
is of elliptic-parabolic type, thus there is no loss of regularity with
respect to the initial data (in contrast to (\ref{maineq})). The
analysis here is simpler than for (\ref{maineq}) and we are able to
show the global in time existence of small solutions.
The toolbox we use for the proofs of both results is universal
for hyperbolic-parabolic and elliptic-parabolic systems. Similar
methods have been applied in \cite{BaMa,FMNP,MPZ,PZ,ZO} to study 
models of elasticity and their couplings with flows of complex
fluids. A key element in these methods is 
the basic conservation law of energy and entropy type.

\medskip

\subsection{The energy density $W$.} \label{ener}
We now introduce the assumptions on the inhomogeneous elastic energy
density $W$ in (\ref{maineq}). Namely,
the nonnegative scalar field $W: \mathbb{R}_+\times
\mathbb{R}^{3\times 3}\rightarrow \overline{\mathbb{R}}_+$ is assumed
to be $\mathcal{C}^4$ in a neighborhood of $(0, \mbox{Id}_3)$ and to
satisfy, with some constant $\gamma >0$:
\begin{equation}\label{ass}
\begin{split}
& W(0, \mbox{Id}) = 0, \quad DW(0, \mbox{Id}) = 0, ~~~ \mbox{ and: } \\ & D^2W(0,\mbox{Id}) :
(\tilde\phi, \tilde F)^{\otimes 2} \geq \gamma (|\tilde\phi|^2 + |\mbox{sym~}\tilde F|^2) \quad
\mbox{ \ \ for all \ }  (\tilde \phi, \tilde F)\in \mathbb{R}\times
\mathbb{R}^{3\times 3}.
\end{split}
\end{equation}

The two main examples of $W$ that we have in mind, concern  non-Euclidean
elasticity and liquid crystal elastomers, where respectively:
\begin{equation}\label{exa}
\begin{split}
W_1(\phi,F) = W_0(FB(\phi)) + \frac{1}{2}|\phi|^2,\\
W_2(\phi,F) = W_0(B(\phi)F) + \frac{1}{2}|\phi|^2.
\end{split}
\end{equation}
are given in terms of the homogeneous energy density
$W_0:\mathbb{R}^{3\times 3}\rightarrow \overline{\mathbb{R}}_+$ and
the smooth tensor field $B:\mathbb{R}\to\mathbb{R}^{3\times 3}$.
In both cases, we assume that $B(\phi)$ is symmetric and positive definite, and that
$B(0)=\mbox{Id}$. 
Further, the principles of material frame invariance, material
consistency, normalisation, and non-degeneracy 
impose the following conditions on $W_0$, valid for all
$F\in\mathbb{R}^{3\times 3}$ and all $R\in SO(3)$:
\begin{equation}\label{elastic_dens}
\begin{minipage}{14cm}
\begin{itemize}
\item[(i)] $W_0(RF) = W_0(F).$
\item[(ii)] $W_0(F)\to +\infty \quad \mbox{ as } \det F\to 0$.
\item[(iii)] $W_0(\mbox{Id}) = 0$.
\item[(iv)] $W_0(F) \geq c~\mbox{dist}^2(F, SO(3))$.
\end{itemize}
\end{minipage}
\end{equation}
Examples of  $W_0$ satisfying the above conditions are: 
\begin{equation*}
\begin{split} 
W_{0,1}(F) & = |(F^TF)^{1/2} - \mbox{Id}|^2 + |\log \det F|^q \\
W_{0,2}(F) & =  |(F^TF)^{1/2} - \mbox{Id}|^2 + \left|\frac{1}{\det F} - 1\right|^q 
\mbox{ for } \det F>0,
\end{split}
\end{equation*}
where $q>1$ and $W_{0,i}$ is intended to be $+\infty$ if $\det F\leq 0$ \cite{MS}.
Another case-study  example, satisfying (i), (iii) but not (iv) is:
$W_0(F)=|F^TF-\mbox{Id}|^2$.

We have the following observation, which we will prove in the Appendix:

\begin{proposition}\label{prop}
For $W_0$ which is $\mathcal{C}^2$ in a neighborhood of $SO(3)$ and
$B$ which is $\mathcal{C}^2$ in a neighborhood of $0$, assume 
(\ref{elastic_dens}) and assume that $B(0) = \mathrm{Id}$. Then $W_1$
and $W_2$ in (\ref{exa}) satisfy (\ref{ass}).
\end{proposition}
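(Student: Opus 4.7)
The vanishing conditions $W_i(0,\mathrm{Id})=0$ and $DW_i(0,\mathrm{Id})=0$ reduce via the chain rule and the identity $B(0)=\mathrm{Id}$ to the corresponding facts for $W_0$ at $\mathrm{Id}$. The first is assumption (iii). For the second, since $W_0\geq 0$ by (iv) and $W_0(\mathrm{Id})=0$ by (iii), the matrix $\mathrm{Id}$ is a minimizer of $W_0$, hence $DW_0(\mathrm{Id})=0$; the chain rule then kills $DW_i(0,\mathrm{Id})$, after noticing that each component of $DW_i(0,\mathrm{Id})$ picks up a factor of $DW_0(\mathrm{Id})$.

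The substantive content is the coercivity of $D^2W_i(0,\mathrm{Id})$. My first step is to promote (iv) to the quantitative bound
\begin{equation*}
D^2W_0(\mathrm{Id}):H^{\otimes 2} \;\geq\; 2c\,|\mathrm{sym}\,H|^2 \qquad \mbox{for every } H\in \R^{3\times 3},
\end{equation*}
by Taylor expansion: $W_0(\mathrm{Id}+\epsilon H)=\tfrac{\epsilon^2}{2}D^2W_0(\mathrm{Id}):H^{\otimes 2}+o(\epsilon^2)$ while $\mathrm{dist}^2(\mathrm{Id}+\epsilon H,SO(3))=\epsilon^2|\mathrm{sym}\,H|^2+O(\epsilon^3)$ by polar decomposition; dividing by $\epsilon^2$ and sending $\epsilon\to 0$ closes the bound. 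Next, differentiating $W_1$ twice along the curve $s\mapsto(s\tilde\phi,\mathrm{Id}+s\tilde F)$, and using that the $DW_0(\mathrm{Id})\cdot\psi''(0)$ term drops out, yields
\begin{equation*}
D^2W_1(0,\mathrm{Id}):(\tilde\phi,\tilde F)^{\otimes 2} \;=\; D^2W_0(\mathrm{Id}):\bigl(\tilde F + B'(0)\tilde\phi\bigr)^{\otimes 2} + |\tilde\phi|^2.
\end{equation*}
An identical formula holds for $W_2$: the two curves $(\mathrm{Id}+s\tilde F)B(s\tilde\phi)$ and $B(s\tilde\phi)(\mathrm{Id}+s\tilde F)$ both have first-order variation $\tilde F+B'(0)\tilde\phi$ at $s=0$, and their second-order discrepancy (a commutator) is annihilated by $DW_0(\mathrm{Id})=0$.

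Combining the two displays and noting that $B'(0)$ is symmetric (as the derivative of the symmetric-matrix-valued map $B$), the proposition reduces to the purely algebraic inequality
\begin{equation*}
2c\,\bigl|\mathrm{sym}\,\tilde F + B'(0)\tilde\phi\bigr|^2 + |\tilde\phi|^2 \;\geq\; \gamma\bigl(|\tilde\phi|^2 + |\mathrm{sym}\,\tilde F|^2\bigr)
\end{equation*}
for some $\gamma>0$. This last step is the main potential snag: a crude Young's inequality on the cross term $\langle\mathrm{sym}\,\tilde F,B'(0)\tilde\phi\rangle$ can absorb the entire $|\tilde\phi|^2$ bonus when $|B'(0)|$ is large. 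I would resolve it either by a carefully tuned weighted Young's inequality (with parameter depending on $c$ and $|B'(0)|$), or more conceptually by compactness: both sides are nonnegative quadratic forms on $\R\times\R^{3\times 3}$ depending only on $(\tilde\phi,\mathrm{sym}\,\tilde F)\in\R\times \mathrm{Sym}(3)$ and sharing the common kernel $\{\tilde\phi=0,\ \mathrm{sym}\,\tilde F=0\}$, so it suffices to minimize their ratio on the unit sphere of this $7$-dimensional space, where both forms are strictly positive and continuous, producing the desired $\gamma>0$.
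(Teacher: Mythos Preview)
Your proposal is correct and follows essentially the same route as the paper: both compute $D^2W_i(0,\mathrm{Id}):(\tilde\phi,\tilde F)^{\otimes 2}=|\tilde\phi|^2+D^2W_0(\mathrm{Id}):(\tilde F+\tilde\phi B'(0))^{\otimes 2}$, invoke the coercivity of $D^2W_0(\mathrm{Id})$ on symmetric matrices, and reduce to the algebraic inequality you wrote down. The only difference is in that final step, where the paper completes the square explicitly (equivalent to your weighted Young's option, yielding the concrete condition $1-c-\frac{c}{1-c}|B'(0)|^2>0$ for small $c$) rather than appealing to compactness.
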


%We will prove Proposition \ref{prop} in section \ref{sec4} as well as
%comment on the origin of the energies of the form (\ref{exa}).

\medskip

\subsection{Background and relation to previous works.}
To put our results in a broader context, consider a general referential
domain $\Omega$ which is an open, smooth and simply connected subset
of $\mathbb{R}^3$. Let $G:\bar{\Omega}\rightarrow \mathbb{R}^{3\times 3}$ be a given smooth
Riemann metric on $\Omega$ and denote its unique positive definite symmetric
square root by $B=\sqrt{G}$. The ``incom\-pa\-ti\-ble elastic
energy'' of a deformation $u$ of $\Omega$ is then given by:
\begin{equation}\label{functiona}
E(u, \Omega) = \int_{\Omega} W_0(\nabla u(x) B(x)^{-1})~\mbox{d}x \qquad \forall
u\in W^{1,2}(\Omega,\mathbb{R}^3),
\end{equation}
where the elastic energy density $W_0$ is as in (\ref{elastic_dens}).
It has been proved in \cite{lepa} that: 
$$\inf_{u\in W^{1,2}(\Omega, \mathbb{R}^3)} E(u, \Omega) = 0$$ 
if and only if the Riemann curvature tensor of $G$ vanishes
identically in $\Omega$ and when (equivalently) the infimum above is achieved
through a smooth isometric immersion $u$ of $G$. 

It is worth mentioning that in the context of thin films when
$\Omega = \Omega^h = U\times (-\frac{h}{2}, \frac{h}{2})$ with some $U\subset
\mathbb{R}^2$, there is a large body of literature relating the
magnitude of curvatures of $G$ to the scaling of $\inf E(\cdot, \Omega^h)$ in terms of
the film's thickness $h$, and subsequently deriving the residual
$2$-dimensional energies using the variational techniques.
Firstly, in the Euclidean case of $G=\mbox{Id}_3$, where the residual energies are driven by 
presence of applied forces $f^h\sim h^\alpha$, three
distinct limiting theories have been obtained \cite{FJMhier} for $\frac{1}{h}E(\cdot,
\Omega^h)\sim h^\beta$
with $\beta>2$ (equivalently $\alpha>2$). Namely: $\beta\in (2,4)$ corresponded to the
linearized Kirchhoff model (nonlinear bending energy), $\beta=4$ to the classical von-K\'arm\'an
model, and $\beta>4$ to the linear elasticity. For $\beta=0$ the
membrane energy has been derived in the seminal papers \cite{LR1,
  LR2}, while the case $\beta=2$ was considered in \cite{FJMM}.
Secondly, in \cite{LPhier} a higher order (infinite) hierarchy of scalings and of the resulting elastic
theories of shells, where the reference configuration is a thin curved
film, has been derived by an asymptotic calculus.

Thirdly, in the context of the non-Euclidean energy (\ref{functiona}),
it has been shown in \cite{BLS} that the scaling: $\inf
\frac{1}{h} E(\cdot,\Omega^h)\sim h^2$ only occurs
when the metric $G_{2\times 2}$ on the mid-plate $U$ can be
isometrically immersed in $\mathbb{R}^3$ with the regularity $W^{2,2}$
and when, at the same time, the three appropriate Riemann
curvatures of $G$ do not vanish identically; the relevant residual theory, obtained through
$\Gamma$-convergence, yielded then a Kirchhoff-like residual energy.
Further, in \cite{LRR} the authors proved that the only outstanding nontrivial residual theory is
a von K\`arm\`an-like energy, valid when: $\inf \frac{1}{h}E(\cdot, \Omega^h) \sim h^4$.
This scale separation, contrary to \cite{FJMhier, LPhier}, is due to the fact that while the
magnitude of external forces is adjustable at will, it seems not to be
the case for the interior mechanism of a given metric
$G$ which does not depend on $h$. In fact, it is the curvature tensor of $G$ 
which induces the nontrivial stresses in the thin film and it has only
six independent components, namely the six sectional
curvatures created out of the three principal directions, which further
fall into two categories: including or excluding the thin direction
variable. The simultaneous vanishing of curvatures in each of these categories
correspond to the two scenarios at hand in terms of the scaling of the
residual energy. 

Other types of the residual energies, pertaining to different contexts
and scalings, have been studied and derived by the authors in \cite{3a, 4a, 12a, 13a,
  14a, lemapa1, lemapa2, lemapa2new, LOP, 23a}.  

\smallskip

Note that, at the formal level, the Euler-Lagrange equations of
(\ref{functiona}) are precisely the first equation in the system (\ref{maineq}).
The dynamical viscoelasticity has been the subject of
vast studies in the last decades (see for example \cite{AM, 4, 1, Demoulini, BLZ, 11,
  25} and references therein), where various results on existence, asymptotics and stability have been
obtained for a large class of models. For the coupled systems of
stress-assisted diffusion of the type (\ref{maineq}), we found a substantial
body of literature in the Applied Mechanics community \cite{v1, v2, v3, v4,
  v5}, deriving these equations from basic principles of continuum
mechanics and irreversible thermodynamics.  For example, the system derived in
\cite{v5} is quite close to (\ref{maineq2}) from the view point of
theory of PDEs; indeed the structure of nonlinearity in both systems
is almost the same. However, derivation from the first principles
aside,  it seems that the analytical study of the Cauchy problem, particularly in long temporal
ranges, has not been yet carried out. The closest investigation in
this direction has been recently proposed in \cite{JiWa},
concerning existence of solutions for models of nonlinear
thermoelasticity, and in \cite{ZO} where the authors 
examine further models of thermoviscoelasticity from the viewpoint of mathematical well-posedness. 
%However the models from \cite{JiWa,ZO} do not contain the complex structure of systems (\ref{maineq}) and 
%(\ref{maineq2}). 
We refer here to \cite{RaSh,LeMucha} as well.

\medskip

\subsection{Notation.}
Throughout the paper we use the following notation.
In (\ref{maineq}) the operator ${div}$ stands for the spacial divergence of an appropriate field.
We use the convention that the divergence of a matrix field is taken
row-wise. We use the matrix norm $|F|=(\mbox{tr}(F^TF))^{1/2}$, which
is induced by the inner product: $\langle F_1:F_2\rangle = \mbox{tr}(F_1^TF_2)$.
% To avoid notational confusion, we will often write $\langle
% F_1:F_2\rangle$ instead of $F_1:F_2$.

The derivatives of $W$ are denoted by $DW$, $D^2W$ etc, while their
action on the appropriate variations $(\tilde\phi, \tilde
F)\in\mathbb{R}\times \mathbb{R}^{3\times 3}$ is denoted by:
$DW(\phi,\nabla u) : (\tilde \phi, \tilde F)$, $D^2W(\phi,\nabla u)
: (\tilde \phi, \tilde F)^{\otimes 2}$ etc, often abbreviating to $DW : (\tilde \phi, \tilde F)$ and $(D^2W)
: (\tilde \phi, \tilde F)^{\otimes 2}$ when no confusion arises.

The partial derivative of $W$ with respect to its second argument is
denoted by $\partial_FW\in \mathbb{R}^{3\times 3}$. The derivative in
the direction of the variation $\tilde F\in \mathbb{R}^{3\times 3}$ is
then $\langle (\partial_FW):\tilde F\rangle\in\mathbb{R}$. 
By $(\partial^k_FW):(\tilde F_1\otimes \tilde F_2\ldots \otimes \tilde
F_{k-1})\in\mathbb{R}^{3\times 3}$ we denote the linear map acting on
$F\in\mathbb{R}^{3\times 3}$ as the $k$th derivative of
$W$ in the direction of $\tilde F_1, \tilde F_2\ldots \tilde
F_{k-1}, F$. Hence, differentiating in $F$ gives:
$$(\partial^k_FW):(\tilde F_1\otimes \ldots \tilde F_k) = \Big\langle
\big((\partial^k_FW):(\tilde F_1\otimes \ldots \tilde F_{k-1})\big) :
\tilde F_k\Big\rangle \in\mathbb{R}.$$ 
Finally, $C, c>0$ stand for universal constants, independent of the
variable quantities at hand. 

\medskip

\subsection{Acknowledgments.}
M.L. was partially supported by the NSF grant DMS-0846996 
and the NSF grant DMS-1406730. 
P.B.M. was partly supported by the NCN grant No. 2011/01/B/ST1/01197.

\section{The crucial a priori estimate.} \label{abd}

\begin{lemma}\label{gaga5}
Every solution $(u,\phi)$ to (\ref{maineq}), with regularity prescribed in Theorem \ref{th1}, satisfies:
\begin{equation}\label{nove}
\frac{\mathrm{d}}{\mathrm{d}t} \int_{\mathbb{R}^3} \frac{1}{2} |u_t|^2 +
W(\phi, \nabla u)~\mathrm{d}x + \int|\nabla (- \Delta)^{-1}\phi_t|^2
~\mathrm{d}x = 0.
\end{equation}
\end{lemma}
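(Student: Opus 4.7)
The strategy is the classical energy identity, exploiting the divergence-form structure of the elastodynamic equation and the gradient structure of the diffusion equation. First, I take the $L^2(\mathbb{R}^3)$ inner product of the first equation in (\ref{maineq}) with $u_t$ and integrate by parts in the divergence term. Under the regularity prescribed in Theorem~\ref{th1}, Sobolev embedding together with the assumption $DW(0,\mathrm{Id}_3)=0$ guarantees that $\partial_F W(\phi,\nabla u)$ decays at infinity sufficiently fast for the boundary contributions to vanish. This yields
$$\frac{d}{dt}\int_{\mathbb{R}^3}\tfrac{1}{2}|u_t|^2\,dx + \int_{\mathbb{R}^3}\bigl\langle \partial_F W(\phi,\nabla u):\nabla u_t\bigr\rangle\,dx = 0.$$

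Next, I differentiate the potential energy by the chain rule,
$$\frac{d}{dt}\int_{\mathbb{R}^3} W(\phi,\nabla u)\,dx = \int_{\mathbb{R}^3}\partial_\phi W\cdot\phi_t\,dx + \int_{\mathbb{R}^3}\bigl\langle \partial_F W:\nabla u_t\bigr\rangle\,dx,$$
and add the two identities. The coupling term $\int\langle\partial_F W:\nabla u_t\rangle$ cancels exactly --- this is precisely the structural balance that makes the system energy-conservative --- leaving
$$\frac{d}{dt}\int_{\mathbb{R}^3}\tfrac{1}{2}|u_t|^2 + W(\phi,\nabla u)\,dx = \int_{\mathbb{R}^3}\partial_\phi W\cdot\phi_t\,dx.$$

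The final step is to rewrite the right-hand side in the form of (\ref{nove}). The second equation of (\ref{maineq}) reads $\phi_t = \Delta\bigl(\partial_\phi W(\phi,\nabla u)\bigr)$. A Taylor expansion at $(0,\mathrm{Id}_3)$ using $DW(0,\mathrm{Id}_3)=0$ gives $\partial_\phi W(\phi,\nabla u)$ as a function that is at least linear in $(\phi,\nabla u-\mathrm{Id}_3)$, so by the algebra property of $H^3(\mathbb{R}^3)$ it belongs to a space on which $(-\Delta)^{-1}$ is well defined on $\mathbb{R}^3$ via the Newtonian potential. One thus identifies $(-\Delta)^{-1}\phi_t = -\partial_\phi W(\phi,\nabla u)$, so that
$$\int_{\mathbb{R}^3}\partial_\phi W\cdot\phi_t\,dx = \int_{\mathbb{R}^3}\partial_\phi W\cdot\Delta(\partial_\phi W)\,dx = -\int_{\mathbb{R}^3}|\nabla\partial_\phi W|^2\,dx = -\int_{\mathbb{R}^3}\bigl|\nabla(-\Delta)^{-1}\phi_t\bigr|^2\,dx,$$
and combining with the previous identity gives (\ref{nove}).

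The proof is conceptually short and rests on the coupling structure of (\ref{maineq}); the only technical point --- not a genuine obstacle --- is the justification of integration by parts and of the inversion $(-\Delta)^{-1}$ on the whole space, which follows from the $H^3$ regularity of $(\phi,\nabla u-\mathrm{Id}_3)$ together with $DW(0,\mathrm{Id}_3)=0$ and Sobolev embedding into $L^6\cap L^\infty$.
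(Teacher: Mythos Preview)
Your proof is correct and follows essentially the same route as the paper: test the first equation with $u_t$, use the chain rule for $\int W(\phi,\nabla u)$, and then rewrite the remaining cross-term $\int\partial_\phi W\,\phi_t$ as the negative $H^{-1}$-dissipation. The only cosmetic difference is that the paper introduces $\psi=(-\Delta)^{-1}\phi$ and tests the second equation against $\psi_t$, whereas you invert $\phi_t=\Delta(\partial_\phi W)$ directly to obtain $(-\Delta)^{-1}\phi_t=-\partial_\phi W$; these are the same computation.
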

\begin{proof}
Testing the first equation in (\ref{maineq}) by $u_t$ and integrating by
parts gives:
\begin{equation*}
\begin{split}
\frac{\mathrm{d}}{\mathrm{d}t} \int_{\mathbb{R}^3} \frac{1}{2} |u_t|^2 +
& W(\phi, \nabla u)~\mathrm{d}x = \int \langle u_t, u_{tt}\rangle
+ \partial_\phi W(\phi,\nabla u) \phi_t
+\langle \partial_FW(\phi,\nabla u) : \nabla u_t\rangle ~\mathrm{d}x\\
& = \int \langle u_t, \mbox{div}\left(\partial_FW(\phi,\nabla u)\right)\rangle 
+ \langle \partial_FW(\phi,\nabla u) : \nabla u_t\rangle
+ \partial_\phi W(\phi,\nabla u) \phi_t ~\mathrm{d}x \\ & =
\int_{\mathbb{R}^3} \partial_\phi W(\phi,\nabla u) \phi_t ~\mathrm{d}x. 
\end{split}
\end{equation*}
Define $\psi=(-\Delta)^{-1}\phi$ and integrate the second equation in
(\ref{maineq}) against $\psi_t$:
\begin{equation*}
\int_{\mathbb{R}^3} |\nabla\psi_t|^2 ~\mathrm{d}x = \int \phi_t \psi_t ~\mathrm{d}x
= \int \psi_t \Delta \big(\partial_\phi W(\phi, \nabla u)\big)~\mathrm{d}x = - 
\int_{\mathbb{R}^3} \partial_\phi W(\phi,\nabla u) \phi_t ~\mathrm{d}x. 
\end{equation*}
Summing the above two equalities yields (\ref{nove}) and achieves the proof.
\end{proof}

\medskip

For every $i,j,k\in\{1,2,3\}$ we now define the correction terms:
\begin{equation}\label{R2}
\begin{split}
\mathcal{R}_{ijk} =  & (\partial_\phi\partial_F^2W) : \big(\nabla u_{x_i,
  x_j}\otimes \nabla u_{x_k} + \nabla u_{x_i, x_k}\otimes \nabla u_{x_j} + \nabla u_{x_j,
  x_k}\otimes \nabla u_{x_i} \big) \\ &
+ (\partial_\phi^2\partial_F\partial_\phi W) : \big(\nabla u_{x_i,
  x_j}\phi_{x_k} + \nabla u_{x_i, x_k}\phi_{x_j} +\nabla u_{x_j,
  x_k}\phi_{x_i} \\ & \qquad\qquad \qquad\qquad \qquad\qquad +
\nabla u_{x_i}\phi_{x_j, x_k} + \nabla u_{x_j}\phi_{x_i, x_k}+ \nabla u_{x_k}\phi_{x_i, x_j}\big) 
\\ & + (\partial_\phi\partial_F^3W) :
\nabla u_{x_i}\otimes \nabla u_{x_j} \otimes \nabla u_{x_k} \\ & +
(\partial^2_\phi\partial_F^2 W) : \big(\nabla u_{x_i} \otimes \nabla
u_{x_j} \phi_{x_k} + \nabla u_{x_i} \otimes \nabla u_{x_k} \phi_{x_j} +\nabla u_{x_j} \otimes \nabla
u_{x_k} \phi_{x_1}   \big)
\\ & + (\partial^3_\phi\partial_F W) : \big(\nabla u_{x_i}
\phi_{x_j} \phi_{x_k} + \nabla u_{x_j} \phi_{x_i} \phi_{x_k} +\nabla u_{x_k}
\phi_{x_i} \phi_{x_j}\big)\\ &
+ (\partial_\phi^4 W) \phi_{x_i} \phi_{x_j}\phi_{x_k} \\ & +
(\partial^3_\phi W) \big(\phi_{x_i, x_j} \phi_{x_k} +  \phi_{x_j, x_k}
\phi_{x_i} +\phi_{x_i, x_k} \phi_{x_j}\big).
\end{split}
\end{equation}

\begin{lemma}\label{lemapriori}
Let $(u,\phi)$ be a solution to (\ref{maineq}), with regularity
prescribed in Theorem \ref{th1}. For $t>0$, define the two quantities:
\begin{equation*}
\begin{split}
& \mathcal{E}(t) = \int_{\mathbb{R}^3} |u_t|^2 + |\nabla^3u_t|^2  +
2W(\phi,\nabla u) \\ & \qquad \qquad + \sum_{i,j,k=1..3} D^2W(\phi,\nabla u) : (\phi_{x_i, x_j,
  x_k}, \nabla u_{x_i, x_j, x_k})^{\otimes 2} 
+ 2 \sum_{i,j,k=1..3} \mathcal{R}_{ijk}\phi_{x_i, x_j, x_k} ~\mathrm{d}x,
\\ & \mathcal{Z}(t) = \|u_t\|^2_{H^{3}(\mathbb{R}^3)} +
\|\nabla u - \mathrm{Id}\|^2_{H^{3}(\mathbb{R}^3)}  + \|\phi\|^2_{H^{3}(\mathbb{R}^3)}. 
\end{split}
\end{equation*}
Then:
\begin{equation}\label{ventinove}
\frac{\mathrm{d}}{\mathrm{d}t} \mathcal{E}\leq C(\mathcal{Z}^{4} + \mathcal{Z}^{3/2}).
\end{equation}
\end{lemma}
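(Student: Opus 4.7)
The plan is to combine Lemma \ref{gaga5} with a higher-order energy identity obtained by differentiating the system (\ref{maineq}) three times in space, and to close the estimate by exploiting the structural cancellation encoded in the correction terms $\mathcal{R}_{ijk}$. The zeroth-order piece $\int|u_t|^2 + 2W\,\mathrm{d}x$ of $\mathcal{E}$ is already controlled by (\ref{nove}), whose contribution to $\frac{\mathrm{d}}{\mathrm{d}t}\mathcal{E}$ is the non-positive term $-2\int|\nabla(-\Delta)^{-1}\phi_t|^2\,\mathrm{d}x$; it therefore suffices to bound the evolution of the remaining pieces.

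For $\int|\nabla^3 u_t|^2\,\mathrm{d}x$, apply $\partial_{x_i}\partial_{x_j}\partial_{x_k}$ to the first equation in (\ref{maineq}), test with $u_{t,x_ix_jx_k}$, integrate by parts and sum over $i,j,k$. Expanding $(\partial_F W(\phi,\nabla u))_{,x_ix_jx_k}$ by the chain rule isolates its principal part $\partial_F^2W : \nabla u_{,x_ix_jx_k} + \partial_\phi\partial_F W\,\phi_{,x_ix_jx_k}$, together with lower-order products of $W$-derivatives and spatial derivatives of $u$ and $\phi$ of order at most two. Simultaneously, compute $\frac{\mathrm{d}}{\mathrm{d}t}$ of the quadratic form in $\mathcal{E}$ by the product rule: the piece in which $\partial_t$ hits the coefficient $D^2W(\phi,\nabla u)$ is crudely majorised by $C\mathcal{Z}^{5/2}$, while the cross term $2\int D^2W:(\phi,\nabla u)_{,x_ix_jx_kt}\otimes(\phi,\nabla u)_{,x_ix_jx_k}\,\mathrm{d}x$ splits into a $u$-part and a $\phi$-part. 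By symmetry of $D^2W$, the $u$-part cancels exactly the leading contribution arising from the $u_t$-test above.

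The $\phi$-part is rewritten using $\phi_{t,x_ix_jx_k} = \Delta[(\partial_\phi W)_{,x_ix_jx_k}]$ from the second equation in (\ref{maineq}). Inspecting the chain rule shows that $(\partial_\phi W)_{,x_ix_jx_k}$ equals its principal part $\partial_\phi^2W\,\phi_{,x_ix_jx_k} + \partial_\phi\partial_F W : \nabla u_{,x_ix_jx_k}$ plus the lower-order products which, on comparison with (\ref{R2}), are exactly $\mathcal{R}_{ijk}$. After integrating $\Delta$ by parts, the principal part generates the non-positive dissipation $-2\int|\nabla(\partial_\phi W)_{,x_ix_jx_k}|^2\,\mathrm{d}x$ (up to absorbable errors), while the $\mathcal{R}_{ijk}$-part produces cubic expressions that are cancelled by the time derivative of the correction $\int 2\mathcal{R}_{ijk}\phi_{,x_ix_jx_k}\,\mathrm{d}x$. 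This is the key structural cancellation: $\mathcal{R}_{ijk}$ is tailored precisely so that its evolution absorbs the subleading part of $(\partial_\phi W)_{,x_ix_jx_k}$.

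The main obstacle is the systematic bookkeeping: one must verify term by term that every cubic expression built from three factors of third-order spatial derivatives of $u$ or $\phi$ (the only products that resist direct control through H\"older and Sobolev in terms of $\mathcal{Z}$) is annihilated by a companion term arising from $\partial_t$ acting either on $D^2W(\phi,\nabla u)$, on $\mathcal{R}_{ijk}$, or on $\phi_{,x_ix_jx_k}$ in the correction. Once this is organised, each surviving term is a product involving at most two $\nabla^3$-factors times an $L^\infty$-factor controlled via $H^3(\R^3)\hookrightarrow L^\infty$, multiplied by a bounded function of $(\phi,\nabla u)$ coming from Taylor expansion of $W$ near $(0,\mathrm{Id})$. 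Young's inequality absorbs any contribution of the form $\nabla(\partial_\phi W)_{,x_ix_jx_k}$ into the parabolic dissipation produced in the previous step, leaving a bound of the form $C(\mathcal{Z}^{3/2} + \mathcal{Z}^4)$ for $\frac{\mathrm{d}}{\mathrm{d}t}\mathcal{E}$, which is (\ref{ventinove}).
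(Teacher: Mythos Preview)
Your strategy is essentially the paper's: differentiate (\ref{maineq}) three times, pair the momentum equation with $u_{x_ix_jx_k,t}$ and the diffusion equation with $(-\Delta)^{-1}\phi_{x_ix_jx_k,t}$, and use the correction $\mathcal{R}_{ijk}$ to absorb the mixed $\phi_{x_ix_jx_k,t}$-term. Your dissipation $\int|\nabla(\partial_\phi W)_{,x_ix_jx_k}|^2$ is in fact identical to the paper's $\int|\nabla\psi_{x_ix_jx_k,t}|^2$, since $\psi_t=(-\Delta)^{-1}\phi_t=-\partial_\phi W(\phi,\nabla u)$.

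There is, however, a genuine gap in one estimate. You assert that the piece where $\partial_t$ hits the coefficient $D^2W(\phi,\nabla u)$ is ``crudely majorised by $C\mathcal{Z}^{5/2}$''. This is false: that piece contains a factor $\|\phi_t\|_{L^\infty}$, and $\phi_t$ is \emph{not} controlled by any power of $\mathcal{Z}$. Indeed $\mathcal{Z}$ only sees $\|\phi\|_{H^3}$, whereas $\phi_t=\Delta(\partial_\phi W)$ requires two more derivatives; the regularity in Theorem~\ref{th1} places $\phi_t$ merely in $L^2(0,T;H^2)$, not in $L^\infty$ in time. The same hidden $\phi_t$ factor appears whenever $\partial_t$ lands on a $W$-coefficient inside $(\mathcal{R}_{ijk})_t$ or in the lower-order remainders from the momentum equation, so the blanket claim ``$L^\infty$-factor controlled via $H^3\hookrightarrow L^\infty$'' breaks down there as well.

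The fix is exactly what the paper does in (\ref{estim1})--(\ref{estim3}): write $\|\phi_t\|_{L^\infty}=\|\Delta\psi_t\|_{L^\infty}\le C\|\nabla\psi_t\|_{H^3}$, so that every such term is bounded by $C\|\nabla\psi_t\|_{H^3}(\mathcal{Z}+\mathcal{Z}^2)$ rather than a pure power of $\mathcal{Z}$. Only then does Young's inequality let you absorb the $\|\nabla\psi_t\|_{H^3}$ factor into the parabolic dissipation you identified, leaving (\ref{ventinove}). Your final sentence gestures at this absorption but attaches it to the wrong object; it is the $\phi_t$ factors, not stray copies of $\nabla(\partial_\phi W)_{,x_ix_jx_k}$, that need to be traded against the dissipation.
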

\begin{proof}
{\bf 1.} We differentiate the first equation in (\ref{maineq}) in a spacial
direction $x_i\in \{x_1, x_2, x_3\}$:
$$u_{x_i,tt} - \mbox{div}\left(\partial_F^2W(\phi, \nabla
  u) : \nabla u_{x_i}\right) = \mbox{div}\left(\partial_F\partial_\phi
  W(\phi, \nabla u)\phi_{x_i}\right). $$
We now differentiate the above twice more in the directions $x_i,
x_j\in \{x_1, x_2, x_3\}$:
\begin{equation}\label{undici}
u_{x_i, x_j, x_k, tt} - \mbox{div}\left(\partial_F^2W(\phi, \nabla
  u) : \nabla u_{x_i, x_j, x_k}\right) = \mbox{div}\left(\partial_F\partial_\phi
  W(\phi, \nabla u)\phi_{x_i, x_j, x_k}\right) + \mathcal{R}_1. 
\end{equation}
The error term $\mathcal{R}_1$ above has the following form, where we suppress
the distinction between different $x_i, x_j, x_k$, retaining hence
only the structure of different terms:
\begin{equation}\label{R1}
\begin{split}
\mathcal{R}_1 = \mbox{div}\Big(&3(\partial_F^3W) : \nabla u_x\otimes \nabla u_{xx}
+ 3(\partial_F^2\partial_\phi W) : (\nabla u_{xx}\phi_x +\nabla
u_x\phi_{xx}) \\ & 
+ 3(\partial_F\partial^2_\phi W)\phi_x\phi_{xx} + (\partial_F^4W) : 
(\nabla u_x)^{\otimes 3} + 3(\partial_F^3\partial_\phi W) : (\nabla
u_x)^{\otimes 2} \phi_x \\ & + 3(\partial_F^2\partial^2_\phi W) : \nabla u_x
(\phi_x)^2 + (\partial_F\partial_\phi^3 W)(\phi_x)^3 \Big).
\end{split}
\end{equation}
Above and in what follows, we also write $(\partial_F^3W)$ instead of $\partial_F^3W(\phi,
\nabla u)$, and $(\partial_F^2\partial_\phi W)$ instead of
$\partial_F^2\partial_\phi W(\phi, \nabla u)$, etc.
Integrating (\ref{undici}) by parts against $u_{x_i, x_j, x_k, t}$ we get:
\begin{equation}\label{dodici}
\begin{split}
& \frac{\mbox{d}}{\mbox{d}t}\int_{\mathbb{R}^3} \frac{1}{2} (u_{x_i,
  x_j, x_k, t})^2 +\phi_{x_i, x_j, x_k}\langle(\partial_F\partial_\phi
W):\nabla u_{x_i, x_j, x_k}\rangle  \\ & \qquad\qquad\qquad \qquad
\qquad + \frac{1}{2}
(\partial^2_FW) : \nabla u_{x_i, x_j, x_k}\otimes \nabla u_{x_i, x_j,
  x_k} ~\mbox{d}x \\ &
= \boxed{\int_{\mathbb{R}^3}  \phi_{x_i, x_j, x_k, t}\langle(\partial_F\partial_\phi W):\nabla
u_{x_i, x_j, x_k}\rangle ~\mathrm{d}x } \\ & \qquad +
\int_{\mathbb{R}^3}  \phi_{x_i, x_j, x_k}\langle(\partial_t\partial_F\partial_\phi W):\nabla
u_{x_i, x_j, x_k}\rangle ~\mathrm{d}x \\ & \qquad + \frac{1}{2} \int_{\mathbb{R}^3} 
(\partial_t\partial^2_F\partial_\phi W) : \nabla u_{x_i, x_j,
  x_k} \otimes \nabla u_{x_i, x_j, x_k} ~\mathrm{d}x + \int_{\mathbb{R}^3}  \mathcal{R}_1 u_{x_i, x_j, x_k, t} ~\mathrm{d}x.
\end{split}
\end{equation}

\medskip

{\bf 2.} Differentiate now the second equation in (\ref{maineq}) in $x_i\in \{x_1, x_2, x_3\}$:
$$\phi_{x_i, t} = \Delta\big(\langle\partial_\phi\partial_FW (\phi, \nabla
  u) : \nabla u_{x_i}\rangle + \partial^2_\phi W(\phi, \nabla u)\phi_{x_i}\big).$$
As before, differentiate twice more in $x_i, x_j\in \{x_1, x_2,
x_3\}$, to obtain:
\begin{equation}\label{diciotto}
\phi_{x_i, x_j, x_k, t} = \Delta\left((\partial_\phi\partial_F W) : \nabla
  u_{x_i, x_j, x_k} + (\partial^2_\phi W) \phi_{x_i, x_j, x_k}
  +\mathcal{R} _{ijk}\right),
\end{equation}
where $\mathcal{R}$ is given in (\ref{R2}).
Testing (\ref{diciotto}) against $(-\Delta)^{-1}\phi_{x_i, x_j, x_k, t} =
\psi_{x_i, x_j, x_k, t}$, we get:
\begin{equation}\label{dicianove}
\begin{split}
 -\int_{\mathbb{R}^3} |\nabla\psi_{x_i, x_j, x_k, t}|^2&~\mbox{d}x  
=  \boxed{\int_{\mathbb{R}^3}  \phi_{x_i, x_j, x_k, t}\langle(\partial_F\partial_\phi W):\nabla
u_{x_i, x_j, x_k}\rangle ~\mbox{d}x}  \\ & 
+\frac{\mbox{d}}{\mbox{d}t}\int_{\mathbb{R}^3} \frac{1}{2}
(\partial^2_\phi W) (\phi_{x_i, x_j,  x_k})^2 ~\mbox{d}x \\ & - \frac{1}{2} \int_{\mathbb{R}^3} 
(\partial_t(\partial^2_\phi W)) (\phi_{x_i, x_j,  x_k})^2~\mbox{d}x
+ \int_{\mathbb{R}^3}  \mathcal{R}_{ijk} \phi_{x_i, x_j, x_k, t} ~\mbox{d}x  .
\end{split}
\end{equation}
Note now that the first terms in the right hand side of both
(\ref{dodici}) and (\ref{dicianove}), namely the terms displayed in boxes, are the same. Consequently,
subtracting (\ref{dicianove}) from (\ref{dodici}), we get:
\begin{equation}\label{venti}
\begin{split}
 \int_{\mathbb{R}^3} |\nabla&\psi_{x_i, x_j, x_k, t}|^2~\mbox{d}x  \\ & +
\frac{1}{2}\frac{\mbox{d}}{\mbox{d}t}\int_{\mathbb{R}^3} (u_{x_i,
  x_j, x_k, t})^2 + D^2W(\phi, \nabla u) : (\phi_{x_i, x_j, x_k},
\nabla u_{x_i, x_j, x_k})^{\otimes 2} ~\mbox{d}x \\ & 
+ \int_{\mathbb{R}^3}\mathcal{R}_{ijk}\phi_{x_i, x_j, x_k, t} ~\mbox{d}x\\ &
\qquad\qquad  = \int_{\mathbb{R}^3}  \phi_{x_i, x_j, x_k}\langle(\partial_t\partial_F\partial_\phi W):\nabla
u_{x_i, x_j, x_k}\rangle ~\mbox{d}x  \\ & \qquad \qquad\qquad + \frac{1}{2} \int_{\mathbb{R}^3} 
(\partial_t\partial^2_F\partial_\phi W) : \nabla u_{x_i, x_j,
  x_k} \otimes  \nabla u_{x_i, x_j, x_k}~\mbox{d}x \\ & \qquad \qquad \qquad  + \frac{1}{2} \int_{\mathbb{R}^3} 
(\partial_t(\partial^2_\phi W)) (\phi_{x_i, x_j,  x_k})^2 ~\mbox{d}x
+ \int_{\mathbb{R}^3}  \mathcal{R}_1 u_{x_i, x_j, x_k, t}~\mbox{d}x. 
\end{split}
\end{equation}

\medskip

{\bf 3.} We will now estimate terms in the right hand side of
(\ref{venti}) and prove that:
\begin{equation}\label{estim1}
\begin{split}
\int_{\mathbb{R}^3}&  |\phi_{x_i, x_j, x_k}|  |\partial_t\partial_F\partial_\phi W | |\nabla
u_{x_i, x_j, x_k}|~\mbox{d}x   + \int_{\mathbb{R}^3}  |\partial_t\partial^2_F\partial_\phi W| |\nabla u_{x_i, x_j,
  x_k}|^2~\mbox{d}x  \\ & 
\qquad\qquad + \int_{\mathbb{R}^3}  |\partial_t(\partial^2_\phi W)| |\phi_{x_i, x_j,  x_k}|^2 ~\mbox{d}x 
\leq C \left(\mathcal{Z}^{3/2} + \|\nabla
  \psi_t\|_{H^{3}(\mathbb{R}^3)} \mathcal{Z}\right).
\end{split}
\end{equation}
and:
\begin{equation}\label{estim2}
|\int_{\mathbb{R}^3}  \mathcal{R}_1 u_{x_i, x_j, x_k, t}~\mbox{d}x  |\leq C
\left(\mathcal{Z}^{3/2} + \mathcal{Z}^2 + \mathcal{Z}^{5/2}\right) 
+ C \|\nabla \psi_t\|_{H^{3}(\mathbb{R}^3)} \left( \mathcal{Z}^{3/2}
  + \mathcal{Z}^2\right).
\end{equation}

For the first term in (\ref{estim1}), we note that by the Sobolev
embedding $\mathcal{C}^{0, 1/2}(\mathbb{R}^3) \hookrightarrow
H^{2}(\mathbb{R}^3)$ one easily gets:
\begin{equation*}
\begin{split}
\int_{\R^3} |\phi_{x_i, x_j, x_k}|  |\partial_t\partial_F\partial_\phi W | |\nabla
u_{x_i, x_j, x_k}|  & \leq \| (\partial^2_F\partial_\phi W) : \nabla u_t +
(\partial_F\partial^2_\phi W)\phi_t\|_{L^\infty}
\|\nabla^3\phi\|_{L^2} \|\nabla^4u\|_{L^2} \\ & \leq C\left( \|\nabla
  u_t\|_{L^\infty} + \|\phi_t\|_{L^\infty}\right)
\|\nabla^3\phi\|_{L^2} \|\nabla^4u\|_{L^2} \\ & \leq C\left( \|\nabla
  u_t\|_{H^{2}} + \|\Delta\psi_t\|_{H^{2}}\right)
\|\nabla^3\phi\|_{L^2} \|\nabla^4u\|_{L^2}  \\ & 
\leq C \left(\mathcal{Z}^{3/2} + \|\nabla
  \psi_t\|_{H^{3}(\mathbb{R}^3)} \mathcal{Z}\right).
\end{split}
\end{equation*}
Similarly, the other two terms in (\ref{estim1}) are bounded by:
$$C\left( \|\nabla u_t\|_{L^\infty} + \|\phi_t\|_{L^\infty}\right)
\left( \|\nabla^4u\|_{L^2}^2 + \|\nabla^3\phi\|_{L^2}^2\right),$$ 
which implies the same estimate as before.

Regarding (\ref{estim2}), the first term in $\int_{\mathbb{R}^3} \mathcal{R}_1u_{x_i,
  x_j, x_k, t} ~\mbox{d}x $, is bounded by:
\begin{equation*}
\begin{split}
\int_{\mathbb{R}^3} |\mbox{div}\big((\partial_F^3 W) : \nabla
  u_x\otimes \nabla u_{xx}\big)| |\nabla^3u_t|~\mbox{d}x 
& \leq C \Big( (\|\nabla  u_t\|_{L^\infty} + \|\phi_t\|_{L^\infty})
  \|\nabla^2 u\|_{L^\infty}  \|\nabla^3  u\|_{L^2} \\ & \qquad + \|\nabla^3
  u\|_{L^4}^2 + \|\nabla^2  u\|_{L^\infty} \|\nabla^4  u\|_{L^\infty}
\Big) \|\nabla^3  u_t\|_{L^2} \\ & \leq  C \left(\mathcal{Z}^{3/2} +
  \mathcal{Z}^{2} + \|\nabla
  \psi_t\|_{H^{3}(\mathbb{R}^3)} \mathcal{Z}^{3/2}\right),
\end{split}
\end{equation*}
because of the Sobolev embedding $W^{1,2}(\mathbb{R}^3)\hookrightarrow L^p(\mathbb{R}^3)$ 
valid for any $p\in [2,6]$. Also:
\begin{equation*}
\begin{split}
\int_{\mathbb{R}^3} |\mbox{div}\big((\partial_F^4 W) : (\nabla
  u_x)^{\otimes 3}\big)| |\nabla^3u_t|~\mbox{d}x 
& \leq C \Big( (\|\nabla  u_t\|_{L^\infty} + \|\phi_t\|_{L^\infty})
  \|\nabla^2 u\|^3_{L^6}  \\ & \qquad\qquad + \|\nabla^2 u\|_{L^\infty}^2  \|\nabla^3
  u\|_{L^2} \Big) \|\nabla^3  u_t\|_{L^2} \\ & 
\leq  C \left(\mathcal{Z}^{5/2} + \mathcal{Z}^2 + \|\nabla   \psi_t\|_{H^{3}(\mathbb{R}^3)} \mathcal{Z}^2\right).
\end{split}
\end{equation*}
Other terms in $\mathcal{R}_1$ induce the same estimate as above. This establishes (\ref{estim2}).

\medskip

{\bf 4.} We now consider the last term in the right hand side of
(\ref{venti}):
\begin{equation}\label{ma}
\int_{\mathbb{R}^3} \mathcal{R}_{ijk}\phi_{x_i, x_j, x_k, t} ~\mbox{d}x =
\left(\frac{\mbox{d}}{\mbox{d}t}
  \int_{\mathbb{R}^3}\mathcal{R}_{ijk}\phi_{x_i, x_j, x_k}~\mbox{d}x \right) -
\int_{\mathbb{R}^3}(\mathcal{R}_{ijk})_t\phi_{x_i, x_j,
  x_k}~\mbox{d}x. 
\end{equation}
We now prove that:
\begin{equation}\label{estim3}
|\int_{\mathbb{R}^3}  (\mathcal{R}_{ijk})_t \phi_{x_i, x_j, x_k}~\mbox{d}x  |\leq C
\left(\mathcal{Z}^{3/2} + \mathcal{Z}^{5/2}\right) 
+ C \|\nabla \psi_t\|_{H^{3}(\mathbb{R}^3)} \left( \mathcal{Z}^{3/2}
  + \mathcal{Z}^2\right).
\end{equation}
First, using the notational convention as in (\ref{R1}),
$\mathcal{R}$ can be replaced by:
\begin{equation}\label{R3}
\begin{split}
\mathcal{R}_2 =  & ~3(\partial_\phi\partial_F^2W) : \nabla u_x\otimes \nabla u_{xx}
+ 3(\partial_\phi^2\partial_F\partial_\phi W) : (\nabla u_{xx}\phi_x +\nabla
u_x\phi_{xx}) \\ & + (\partial_\phi\partial_F^3W) :
(\nabla u_x)^{\otimes 3} + 3(\partial^2_\phi\partial_F^2 W) : (\nabla
u_x)^{\otimes 2} \phi_x \\ & + 3(\partial_F\partial^3_\phi W) : \nabla u_x
(\phi_x)^2 + (\partial_\phi^4 W)(\phi_x)^3 + 3(\partial^3_\phi W)\phi_x\phi_{xx}. 
\end{split}
\end{equation}
The first term in (\ref{R3}) can be estimated as before, using
embedding and interpolation theorems:
\begin{equation*}
\begin{split}
\int_{\mathbb{R}^3} |\big((\partial_\phi\partial_F^2 &W) : \nabla
  u_x\otimes \nabla u_{xx}\big)_t| |\nabla^3\phi|~\mbox{d}x 
\\ & \leq C \Big( (\|\nabla  u_t\|_{L^\infty} + \|\phi_t\|_{L^\infty})
  \|\nabla^2 u\|_{L^\infty}  \|\nabla^3  u\|_{L^2} \\ & \qquad \qquad + \|\nabla^2
  u_t\|_{L^4}\|\nabla^3 u\|_{L^4} + \|\nabla^2  u\|_{L^\infty} \|\nabla^3  u_t\|_{L^2}
\Big) \|\nabla^3  \phi\|_{L^2} \\ & \leq  C \left(\mathcal{Z}^{3/2} +
  \mathcal{Z}^{2} + \|\nabla
  \psi_t\|_{H^{3}(\mathbb{R}^3)} \mathcal{Z}^{3/2}\right),
\end{split}
\end{equation*}
while the third term in $\mathcal{R}_2$ is estimated by:
\begin{equation*}
\begin{split}
\int_{\mathbb{R}^3} |\big(\partial_\phi\partial_F^3 W) : & (\nabla
  u_x)^{\otimes 3}\big)_t| |\nabla^3\phi|~\mbox{d}x \\
& \leq C \Big( (\|\nabla  u_t\|_{L^\infty} + \|\phi_t\|_{L^\infty})
  \|\nabla^2 u\|^3_{L^6}  + \|\nabla^2 u\|_{L^\infty}^2  \|\nabla^3
  u_t \|_{L^2} \Big) \|\nabla^3  \phi\|_{L^2} \\ & 
\leq  C \left(\mathcal{Z}^{5/2} + \mathcal{Z}^2 + \|\nabla \psi_t\|_{H^{3}(\mathbb{R}^3)} \mathcal{Z}^2\right).
\end{split}
\end{equation*}
Other terms in $\mathcal{R}_2$ induce the same estimate as above. This establishes (\ref{estim3}).

\medskip

{\bf 5.} Summing now (\ref{venti}) over all triples $x_i, x_j, x_k$,
adding (\ref{nove}), and taking into account (\ref{ma}),
(\ref{estim1}),  (\ref{estim2}) and (\ref{estim3}), we obtain: 
\begin{equation*}
\begin{split}
\frac{\mathrm{d}}{\mathrm{d}t} \mathcal{E} +
\left( 2\|\nabla\psi_t\|_{L^2}^2 + \|\nabla^4\psi_t\|^2_{L^2}\right) 
 & \leq C \left(\mathcal{Z}^{5/2} + \mathcal{Z}^{3/2}\right)
 + C \|\nabla \psi_t\|_{W^{3}_{2}(\mathbb{R}^3)} \left(\mathcal{Z}^2 + \mathcal{Z}\right) \\
& \leq \epsilon \|\nabla \psi_t\|_{H^{3}(\mathbb{R}^3)}^2  + C
\left(\mathcal{Z}^{4} + \mathcal{Z}^{3/2}\right). 
\end{split}
\end{equation*}
in view of Young's inequality. Consequently, (\ref{ventinove}) follows
and the proof is complete.
\end{proof}

We now deduce the main a-priori estimate of this section:

\begin{theorem}\label{thap}
Under the assumptions of Theorem \ref{th1}, any solution on the time
interval $[0, T]$ to (\ref{maineq}) (\ref{initial1} - \ref{initial2}) satisfies:
\begin{equation}\label{a12A}
 \sup_{t\leq T} \mathcal{Z}(t) \leq C\Big(\mathcal{E}(0) +
 T^2\mathcal{E}_0(0) + \|u_0-\mathrm{id}\|_{L^2}^2\Big),
\end{equation}
where: $\mathcal{E}_0(0)=\int_{\mathbb{R}^3} |u_1|^2 +
2W(\phi_0,\nabla u_0)~\mathrm{d}x$, and $C$ is a universal constant.
%In particular, smallness of initial data are fulfilled.
\end{theorem}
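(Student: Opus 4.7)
The strategy combines the zero-order energy identity (\ref{nove}) from Lemma \ref{gaga5} with the higher-order differential inequality (\ref{ventinove}) from Lemma \ref{lemapriori}, and hinges on the coercivity of $\mathcal{E}$ with respect to $\mathcal{Z}$ in the small-data regime. First, integrating (\ref{ventinove}) on $[0,t]$ with $t\le T$ yields
\[
\mathcal{E}(t)\le \mathcal{E}(0) + CT\sup_{s\le T}\bigl(\mathcal{Z}(s)^{4} + \mathcal{Z}(s)^{3/2}\bigr),
\]
while the monotonicity in Lemma \ref{gaga5} gives $\int \tfrac12|u_t(t)|^2 + W(\phi,\nabla u)\,\mathrm{d}x \le \tfrac12\mathcal{E}_0(0)$, so in particular $\|u_t(t)\|_{L^2}^2\le \mathcal{E}_0(0)$ for all $t\in[0,T]$. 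Integrating $u_t$ in time then produces the auxiliary bound
\[
\|u(t)-\mathrm{id}\|_{L^2}^2\le 2\|u_0-\mathrm{id}\|_{L^2}^2 + 2T^2\mathcal{E}_0(0),
\]
which furnishes the two terms $T^2\mathcal{E}_0(0)+\|u_0-\mathrm{id}\|_{L^2}^2$ appearing on the right of (\ref{a12A}).

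The crux is a coercivity estimate of the form
\[
c\mathcal{Z}(t)\le \mathcal{E}(t) + \|u(t)-\mathrm{id}\|_{L^2}^2 + C\mathcal{Z}(t)^{3/2},
\]
to be established for $\mathcal{Z}(t)$ sufficiently small. I would argue as follows. The term $\int |u_t|^2+|\nabla^3 u_t|^2$ controls $\|u_t\|_{H^3}^2$ via Gagliardo--Nirenberg interpolation on $\mathbb{R}^3$. For the elastic part, smallness of $\mathcal{Z}$ together with the embedding $H^2\hookrightarrow L^\infty(\mathbb{R}^3)$ keeps $(\phi,\nabla u)$ in a pointwise neighborhood of $(0,\mathrm{Id}_3)$, whence (\ref{ass}) and the $\mathcal{C}^4$-regularity of $W$ give the pointwise lower bound $D^2W(\phi,\nabla u):(\tilde\phi,\tilde F)^{\otimes 2}\ge \tfrac{\gamma}{2}(|\tilde\phi|^2+|\mathrm{sym}\,\tilde F|^2)$. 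Applied to $(\phi_{x_ix_jx_k},\nabla u_{x_ix_jx_k})$, followed by Korn's inequality on $\mathbb{R}^3$ applied to $v=u_{x_ix_jx_k}$ for each multi-index $(i,j,k)$, this extracts $\|\nabla^3\phi\|_{L^2}^2+\|\nabla^4 u\|_{L^2}^2$ from the summed third-order Hessian term in $\mathcal{E}$. A parallel Taylor expansion of $\int 2W$ around $(0,\mathrm{Id}_3)$, combined with Korn applied to $u-\mathrm{id}$, extracts $\|\phi\|_{L^2}^2+\|\nabla u-\mathrm{Id}\|_{L^2}^2$, the cubic remainder being absorbed into $C\mathcal{Z}^{3/2}$ by bounding $\|\cdot\|_{L^3}^3\le \|\cdot\|_{L^\infty}\|\cdot\|_{L^2}^2$. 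The corrector $\int \mathcal{R}_{ijk}\phi_{x_ix_jx_k}$ in $\mathcal{E}$, cubic or higher in $(\phi,\nabla u-\mathrm{Id})$ by inspection of (\ref{R2}), is bounded by $C\mathcal{Z}^{3/2}$ via Sobolev embeddings. Interpolation between $L^2$ and the top-order parts fills in the intermediate $H^3$ norms.

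Combining the two ingredients produces
\[
\mathcal{Z}(t)\le C\bigl(\mathcal{E}(0) + T^2\mathcal{E}_0(0) + \|u_0-\mathrm{id}\|_{L^2}^2\bigr) + CT\sup_{s\le T}\bigl(\mathcal{Z}(s)^{4}+\mathcal{Z}(s)^{3/2}\bigr) + C\mathcal{Z}(t)^{3/2}.
\]
Under the smallness of the initial data, the nonlinear tails are strictly subdominant and a standard continuity argument, leveraging continuity of $t\mapsto \mathcal{Z}(t)$ and the assumed smallness at $t=0$, allows to absorb them on the right, producing (\ref{a12A}). The principal obstacle is precisely the coercivity step: one must extract $\|\nabla^4 u\|_{L^2}^2$ from $\sum_{i,j,k}\|\mathrm{sym}\,\nabla u_{x_ix_jx_k}\|_{L^2}^2$ via Korn on $\mathbb{R}^3$, while simultaneously guaranteeing, through the bootstrap, that the pointwise regime required by (\ref{ass}) is preserved along the whole evolution on $[0,T]$.
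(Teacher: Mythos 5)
Your proposal is correct and follows essentially the same route as the paper: integrate the differential inequality of Lemma \ref{lemapriori}, control $\|u_t\|_{L^2}$ and hence $\|u-\mathrm{id}\|_{L^2}$ via Lemma \ref{gaga5}, establish coercivity of $\mathcal{E}+\|u-\mathrm{id}\|_{L^2}^2$ over $\mathcal{Z}$ using (\ref{ass}), Korn's inequality and Sobolev embeddings in the small-$\mathcal{Z}$ regime, and close with an absorption/continuity argument. The only differences are cosmetic (the paper bounds the corrector term by $C\mathcal{Z}^2$ rather than $C\mathcal{Z}^{3/2}$, and phrases the absorption as the explicit condition $CT\bar{\mathcal{Z}}^{1/2}\leq \tfrac12$).
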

\begin{proof}
Assume that the quantities in (\ref{small}) are sufficiently small. In
particular, we require that $\mathcal{Z}(0)\ll 1$ and that
$\mathcal{Z}$ is sufficiently small on an interval $[0, t_0]$, where
we also choose an appropriate $t_0\ll T$. Lemma  \ref{lemapriori} implies: $\mathcal{E}'(t) \leq C
\mathcal{Z}^{3/2}(t)$, which is equivalent to:
\begin{equation}\label{a1}
\mathcal{E}(t) \leq C\int_0^t \mathcal{Z}^{3/2}(s) ~\mbox{d}s + \mathcal{E}(0).
\end{equation}
Further, by (\ref{nove}) it follows that:
\begin{equation}\label{a2}
 \sup_t \|u_t\|_{L^2}^2 \leq \mathcal{E}_0(0).
\end{equation}
Since:
\begin{equation}\label{a3}
\begin{split}
 \forall t\leq t_0\qquad \|u(t)-\mathrm{id}\|_{L^2}^2 & = 2\int_0^t
 \int_{\mathbb{R}^3} \langle u -\mathrm{id},  u_t\rangle ~\mbox{d}x + \|u_0-\mathrm{id}\|_{L^2}^2 \\
& \leq 2T\big(\sup_{s\leq t} \|u_t\|_{L^2}\big) \Big(\sup_{s\leq
  t}\|u-\mathrm{id}|_{L^2}\Big) +  \|u_0-\mathrm{id}\|_{L^2}^2, 
\end{split}
\end{equation}
we easily obtain in view of (\ref{a2}):
\begin{equation}\label{a4}
\begin{split}
\sup_{t\leq t_0} \|u(t)-\mathrm{id}\|_{L^2}^2& \leq 4t_0^2\sup_{t\leq
  t_0} \|u_t(t)\|_{L^2}^2 +  2\|u_0-\mathrm{id}\|_{L^2}^2 \\ 
& \leq 4 t_0^2 \mathcal{E}_0(0) +  2\|u_0-\mathrm{id}\|_{L^2}^2.
\end{split}
\end{equation}

Further, we observe that thanks to (\ref{ass}), to Korn's inequality
and to
Poincar\'e's inequality,  there exist constants $c, C > 0$ so that:
\begin{equation*}
\begin{split}
c &\mathcal{Z}(t) \leq \\ 
& \int_{\mathbb{R}^3} |u_t|^2 + |\nabla^3u_t|^2  +
2W(\phi,\nabla u)  +
\sum_{i,j,k=1..3} D^2W(\phi,\nabla u) : (\phi_{x_i, x_j,
  x_k}, \nabla u_{x_i, x_j, x_k})^{\otimes 2} ~\mbox{d}x  \\ &
\qquad\qquad\qquad \qquad\qquad\qquad \qquad +
\int_{\mathbb{R}^3} |u-\mathrm{id}|^2 ~\mbox{d}x \leq C \mathcal{Z}(t),
\end{split}
\end{equation*}
as well as: 
\begin{equation*}
\big |\int_{\mathbb{R}^3} \sum_{i,j,k=1..3}
\mathcal{R}_{ijk}\phi_{x_i, x_j, x_k} ~\mathrm{d}x\big |\leq C
\|\mathcal{R}\|_{L^2} \|\nabla^3\phi\|_{L^2} \leq 
C \mathcal{Z}^{3/2}(t) \mathcal{Z}^{1/2}(t) = C \mathcal{Z}^{2}(t),
\end{equation*}
where we estimated each term in (\ref{R2}) by the Cauchy-Schwartz
inequality and noted the appropriate Sobolev embedding.
Consequently, we arrive at:
\begin{equation}\label{a7}
 \forall t\leq t_0\qquad 
\mathcal{E}(t)+\|u-\mathrm{id}\|_{L^2}^2(t) \geq c\mathcal{Z}(t) -
C\mathcal{Z}^{2}(t) \geq c \mathcal{Z}(t),
\end{equation}
provided that $\mathcal{Z}\ll 1$ is sufficiently small on the time
interval we consider. In view of (\ref{a1}), (\ref{a7}) and
(\ref{a4}), we now get:
\begin{equation}\label{a8}
\forall t\leq t_0\qquad  \mathcal{Z}(t) \leq C\Big(\int_0^t \mathcal{Z}^{3/2}(s)~\mbox{d}s +
\mathcal{E}(0) + t_0^2\mathcal{E}_0(0) + \|u_0-\mathrm{id}\|_{L^2}^2\Big). 
\end{equation}

Calling $\bar{\mathcal{Z}} =\sup_{t\in [0,t_0]} \mathcal{Z}(t)$, we have:
\begin{equation}\label{a10}
\bar{\mathcal{Z}}  \leq \big(C t_0 {\bar{\mathcal{Z}}}^{1/2}\Big)
\bar{\mathcal{Z}} +  C\big(\mathcal{E}(0) + t_0^2\mathcal{E}_0(0) + \|u_0-\mathrm{id}\|_{L^2}^2\big),
\end{equation}
which combined with the requirement:  $ C T {\bar{\mathcal{Z}}}^{1/2} \leq \frac12$
yields:
\begin{equation}\label{a12}
\mathcal{Z}(t_0) \leq \bar{\mathcal{Z}}\leq C\big(\mathcal{E}(0) +
t_0^2\mathcal{E}_0(0) + \|u_0-\mathrm{id}\|_{L^2}^2\big). 
\end{equation}
The above clearly implies the Theorem in view of the smallness of initial data in (\ref{small}).
\end{proof}

\section{Proof of Theorem \ref{th1}: Existence of solutions to
  (\ref{maineq}).} \label{sec3}

In this section we construct approximate solutions to the Cauchy problem
(\ref{maineq}) (\ref{initial1} - \ref{initial2}), which satisfy the same a-priori bounds
as in section \ref{abd}.  Given $\epsilon >0$, consider the regularized problem:
\begin{equation}\label{approx}
\left\{\begin{split}
& u_{tt} - \mbox{div}\Big(\partial_F W(\phi, \nabla u)\Big) - \epsilon \Delta u = 0 \\
& \phi_t = \Delta \Big(\partial_\phi W(\phi,\nabla u)\Big)
\end{split}\right.
\end{equation}
with the same initial data as in (\ref{initial1} - \ref{initial2}).

\begin{lemma}\label{th3.1}
Assume that all quantities in (\ref{small}) are sufficiently
small. Then, there exists $T_\epsilon >0$ and a solution $(u^\epsilon,
\phi^\epsilon)$ of (\ref{approx}) (\ref{initial1} - \ref{initial2}) on
$\mathbb{R}^3\times [0, T_\epsilon)$, such that:
\begin{equation*}\label{oldbounds}
\begin{split} 
& u^\epsilon -\mathrm{id} \in L^\infty(0,T;H^4(\mathbb{R}^3)), \quad u^\epsilon_{tt}\in
L^\infty(0,T;H^2(\R^3)), \\
& \phi^\epsilon \in L^\infty(0,T;H^3(\mathbb{R}^3)) \mbox{ \ \ and \ \ } \phi^\epsilon_t \in
L^2(0,T;H^2(\mathbb{R}^3)).
\end{split}
\end{equation*}
\end{lemma}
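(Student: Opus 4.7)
The plan is to construct $(u^\epsilon, \phi^\epsilon)$ by a Picard/contraction-mapping argument. The key observation is that for fixed $\epsilon > 0$ the added term $-\epsilon \Delta u$ turns the first equation of (\ref{approx}) into a Kelvin-Voigt strongly damped wave equation, while the second equation is parabolic with principal part $\gamma \Delta \phi$ (by (\ref{ass})). Thus (\ref{approx}) becomes effectively a parabolic-parabolic coupled system and enjoys standard local well-posedness. The time $T_\epsilon$ is allowed to depend on $\epsilon$; the $\epsilon$-uniform extension to the full interval $[0,T]$ will follow in a separate step from the a-priori estimates of Section \ref{abd}.

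First I would set up a linearized iteration. Given $(\bar u, \bar \phi)$ lying in a closed ball $\mathcal{B}_R$ of the energy space
\[
X = \bigl\{ (v,\psi) \,:\; v-\mathrm{id} \in L^\infty(0,T_\epsilon; H^4), \; v_{tt} \in L^\infty(0,T_\epsilon; H^2), \; \psi \in L^\infty(0,T_\epsilon; H^3), \; \psi_t \in L^2(0,T_\epsilon; H^2) \bigr\},
\]
with $\nabla \bar u$ close to $\mathrm{Id}_3$ and $\bar\phi$ close to $0$ in $L^\infty$ (so that $W$ remains $\mathcal{C}^4$ at the points of evaluation and $\det\nabla \bar u > 0$), define $(u,\phi) = \Psi(\bar u,\bar\phi)$ as the solution of the decoupled linear system
\[
u_{tt} - \epsilon \Delta u = \mathrm{div}\bigl(\partial_F W(\bar\phi, \nabla \bar u)\bigr), \qquad \phi_t = \Delta\bigl(\partial_\phi W(\bar\phi, \nabla \bar u)\bigr),
\]
with the prescribed initial data (\ref{initial1})-(\ref{initial2}). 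The first equation is solvable by Fourier/semigroup methods for strongly damped waves, producing $u-\mathrm{id}\in L^\infty(0,T_\epsilon;H^4)$ and $u_{tt}\in L^\infty(0,T_\epsilon;H^2)$ since the source lies in $L^\infty(0,T_\epsilon;H^3)$. The second equation is (after extracting the linear part $\gamma\Delta\phi$) a linear heat equation with $L^2(0,T_\epsilon;H^2)$ forcing, giving $\phi\in L^\infty(0,T_\epsilon;H^3)$ and $\phi_t\in L^2(0,T_\epsilon;H^2)$ by standard parabolic regularity and Duhamel's formula.

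Next I would verify that for $T_\epsilon > 0$ small enough (depending on $\epsilon$, $\gamma$, and the size of the data in (\ref{small})), the map $\Psi$ sends $\mathcal{B}_R$ into itself and is a strict contraction in a slightly weaker norm, such as the $X$-norm with one derivative less. Self-mapping uses the linear estimates together with Moser composition estimates for $\partial_F W(\bar\phi,\nabla\bar u)$ and $\partial_\phi W(\bar\phi,\nabla\bar u)$ and smallness of $T_\epsilon$ to absorb the quadratic terms; contraction uses the Lipschitz dependence of $\partial_F W,\partial_\phi W$ on their arguments in a neighborhood of $(0,\mathrm{Id}_3)$. Banach's fixed-point theorem then yields a unique fixed point, which is the required $(u^\epsilon,\phi^\epsilon)$. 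The non-interpenetration constraint (\ref{nonin}) propagates for small $T_\epsilon$ since $\nabla u^\epsilon$ remains close to $\mathrm{Id}_3$ in $H^3\hookrightarrow L^\infty$.

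The main obstacle is obtaining quantitative Sobolev bounds for the nonlinear composition $W(\bar\phi,\nabla\bar u)$ and its derivatives at the high regularity required ($H^3$ source for the $H^4$ theory of $u$, and $H^3$ for $\phi$). This is handled by the Moser calculus applied to $\mathcal{C}^4$-smooth functions, which is permissible precisely because the smallness assumption (\ref{small}) keeps the iterates within the neighborhood of $(0,\mathrm{Id}_3)$ where $W$ is smooth. Once these nonlinear estimates are in place, the contraction argument closes and Lemma \ref{th3.1} follows.
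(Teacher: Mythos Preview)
Your approach differs from the paper's, which constructs $(u^\epsilon,\phi^\epsilon)$ by a Galerkin scheme: one projects (\ref{approx}) onto finite-dimensional subspaces $W^N\times V^N$ using the auxiliary scalar products $\langle\cdot,\cdot\rangle_W$, $\langle\cdot,\cdot\rangle_V$ that mimic the testing in Lemma~\ref{lemapriori}, and then re-runs the energy computation of Section~\ref{abd} verbatim for the Galerkin approximants. The point of the $\epsilon\Delta u$ term in the paper is not to change the type of the equation but to supply, at the Galerkin level, the missing coercivity $\epsilon\|\nabla u^N\|_{H^3}^2$ in $\mathcal{E}_\epsilon$ (see (\ref{e7})--(\ref{e9})); without it one would need Korn's inequality, which in turn requires control of $\|u^N-\mathrm{id}\|_{L^2}$, a quantity not governed by the energy identity alone.

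There is a genuine gap in your argument. The regularized first equation is $u_{tt}-\epsilon\Delta u=\mathrm{div}\,\partial_FW$, which is an ordinary linear wave operator acting on $u$; a Kelvin--Voigt strongly damped wave equation would carry $-\epsilon\Delta u_t$, not $-\epsilon\Delta u$. No parabolic smoothing is available, and the equation remains hyperbolic. Once this is recognized, your fully frozen iteration loses one derivative and does not close: with $(\bar u,\bar\phi)\in X$ you have $\nabla\bar u-\mathrm{Id}_3\in L^\infty_tH^3$ and $\bar\phi\in L^\infty_tH^3$, so by Moser estimates $\partial_FW(\bar\phi,\nabla\bar u)\in L^\infty_tH^3$ and hence the source $\mathrm{div}\,\partial_FW(\bar\phi,\nabla\bar u)$ lies only in $L^\infty_tH^2$, not in $L^\infty_tH^3$ as you assert. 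The energy estimate for the linear wave equation $u_{tt}-\epsilon\Delta u=f$ with $f\in L^1_tH^2$ and data $(u_0,u_1)\in H^4\times H^3$ returns only $u\in L^\infty_tH^3$, so $\Psi$ does not land back in $\mathcal{B}_R$. The same loss occurs in the $\phi$-equation: $\partial_\phi W(\bar\phi,\nabla\bar u)\in L^\infty_tH^3$, so $\Delta\partial_\phi W\in L^\infty_tH^1$, and whether you integrate in time or extract a heat operator you do not recover $\phi\in L^\infty_tH^3$ with $\phi_t\in L^2_tH^2$.

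This derivative loss is exactly what the coupled energy identity in Lemma~\ref{lemapriori} circumvents: the top-order cross terms (the boxed expressions in (\ref{dodici}) and (\ref{dicianove})) cancel only when the two equations are tested simultaneously against $u_{xxx,t}$ and $(-\Delta)^{-1}\phi_{xxx,t}$. A decoupled Picard scheme throws this cancellation away. If you want to salvage a fixed-point proof, you must at minimum keep the principal parts of both equations coupled in the linearization (i.e.\ freeze only the coefficients $\partial_F^2W,\partial_\phi\partial_FW,\partial_\phi^2W$ at $(\bar\phi,\nabla\bar u)$ and solve the resulting linear coupled system), and then the required linear estimate is essentially the content of Lemma~\ref{lemapriori} again. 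The paper's Galerkin route achieves the same thing with less overhead.
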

\begin{proof}
{\bf 1.} Since $\epsilon>0$ is fixed, we drop the superscript
$^\epsilon$ in order to lighten the notation in the next two steps. We proceed by the
Galerkin method. Choose an orthonormal base $\{w^k\}_{k=1}^\infty$ in 
the space $H^4(\R^3,\R^3)$ equipped with the scalar product: 
\begin{equation}\label{e2}
\langle w, \tilde w\rangle_{H^4}=\langle w, \tilde w\rangle_{L^2} + \langle
\nabla^4 w : \nabla^4 \tilde w\rangle_{L^2}.
\end{equation}
Similarly, let $\{v^k\}_{k=1}^\infty$ be an orthonormal basis in
$H^3(\R^3)$ equipped with:
\begin{equation}\label{e3}
 \langle v, \tilde v\rangle_{H^3} = \langle
 v,\tilde v\rangle_{L^2} + \langle \nabla^3 v : \nabla^3 \tilde v\rangle_{L^2}.
\end{equation}
Denote: $W^N=\mbox{span} \{w^1,...,w^N\}$ and $V^N=\mbox{span} \{v^1,...,v^N\}$.

We now introduce the auxiliary scalar products:
\begin{equation}\label{e2a}
\begin{split}
&\langle w, \tilde w\rangle_{W}=\langle w, \tilde w\rangle_{L^2} + \langle
\nabla^3 w : \nabla^3 \tilde w\rangle_{L^2}\qquad \forall w,\tilde w
\in H^4(\R^3,\R^3),\\
& \langle v,\tilde v\rangle_{V} = \langle
 v,(-\Delta)^{-1} \tilde v\rangle_{L^2} + \langle \nabla^3 v :
 (-\Delta)^{-1} \nabla^3 \tilde v\rangle_{L^2} \qquad \forall v, \tilde
 v\in H^3(\R^3,\R).
\end{split}
\end{equation}
Clearly, these products are not equivalent to (\ref{e2}), (\ref{e3}), however
their properties will allow for using the energy estimates of the proof
of Lemma \ref{lemapriori} to prove the regularity of approximate
solutions  $(u^N,\phi^N)$ which we define below.

\smallskip

Let $(u^N, \phi^N)\in W^N\times V^N$ be the solution to:
\begin{equation}\label{e5}
\left\{ \begin{split}
& \big \langle u^N_{tt} - \div \partial_FW(\phi^N,\nabla u^N) - \epsilon \Delta u^N, w^l\big\rangle_W=0, \\
& \big \langle \phi_t^N - \Delta \partial_\phi W(\phi^N,\nabla u^N), v^l\big\rangle_V=0,
\qquad\qquad\qquad \forall l:1\ldots N \\
& u^N(0,\cdot) = \mathbb{P}_{W^N}(u_0), \quad (u^N)_t(0,\cdot) =
\mathbb{P}_{W^N}(u_1), \quad \phi^N(0,\cdot) = \mathbb{P}_{V^N}(\phi_0).
 \end{split}\right.
\end{equation}
By $\mathbb{P}$ we denote here the orthogonal projections on 
appropriate subspaces. The classical theory of systems of ODEs guarantees existence of
solutions to (\ref{e5}) on some time interval $[0,T_N)$. We now prove that
these time intervals may be taken uniform for all sequences
$\{u^N,\phi^N\}$.

\medskip

{\bf 2.} Since $u_t^N\in W^N$ and $\phi_t^N\in V^N$, (\ref{e5}) implies:
\begin{equation}\label{e6}
\begin{split}
& \big\langle u^N_{tt} - \div \partial_F W(\phi^N,\nabla u^N) - \epsilon \Delta u^N, u^N_t\big\rangle_W=0, \\
& \big \langle \phi_t^N - \Delta \partial_\phi W(\phi^N,\nabla u^N), \phi^N_t\big\rangle_V=0.
\end{split}
\end{equation}
Note that the first equation in (\ref{e6}) is equivalent to:
\begin{equation*}\label{e6a}
\begin{split}
& \Big \langle u^N_{tt} - \div \partial_FW(\phi^N,\nabla u^N) -
 \epsilon \Delta u^N, u^N_t\Big \rangle_{L^2} \\
& \quad + \sum_{i,j,k = 1..3} \Big\langle u^N_{x_i, x_j, x_k, tt} - {\rm div}\big(\partial_F^2W(\phi^N, \nabla
  u^N) : \nabla u^N_{x_i, x_j, x_k}\big) -\epsilon \Delta  u^N_{x_i,
  x_j, x_k}, u^N_{x_i,x_j,x_k,t} \Big \rangle_{L^2} \\ 
& = \sum_{i,j,k = 1..3} \Big\langle {\rm div}\big(\partial_F\partial_\phi
  W(\phi^N, \nabla u^N)\phi_{x_i, x_j, x_k}\big), u^N_{x_i,x_j,x_k,t} \Big \rangle_{L^2} +
\big\langle \mathcal{R}_1^{N}: \nabla^3 u^N_t\big\rangle,
\end{split}
\end{equation*}
where by $\mathcal{R}_1^N$ we denote the error terms induced by the
functions $u^N, \phi^N$ as in (\ref{undici}), (\ref{R1}).
Likewise, the second equation in (\ref{e6}) becomes:
\begin{equation*}\label{e6b}
\begin{split}
& \Big\langle \phi_t^N - \Delta \partial_\phi W(\phi^N,\nabla u^N), (-\Delta)^{-1} \phi^N_t\Big\rangle_{L^2}\\
& \quad + \sum_{i,j,k =1..3 } \Big\langle \phi^N_{x_i, x_j, x_k, t} - \Delta\Big((\partial_\phi\partial_F W^N) : \nabla
  u^N_{x_i, x_j, x_k} - (\partial^2_\phi W^N) \phi^N_{x_i, x_j, x_k}\Big),  (-\Delta)^{-1} \phi^N_{x_i, x_j, x_k, t} \Big\rangle_{L^2}\\
& = \sum_{i,j,k =1..3 } \Big\langle \Delta \mathcal{R}^N_{ijk}, (-\Delta)^{-1} \phi^N_{x_i, x_j, x_k, t} \Big\rangle_{L^2},
\end{split}
\end{equation*}
where we used the identity (\ref{diciotto}) and the notation
(\ref{R2}), with the superscript $^N$ indicating that they concern
$u^N$ and $\phi^N$.

\smallskip

Let $\mathcal{E}[u^N, \phi^N](t)$ be as in Lemma \ref{lemapriori}
with $(u,\phi)$ replaced by $(u^N, \phi^N)$, and define:
\begin{equation}\label{e7}
 \mathcal{E}_\epsilon[u^N, \phi^N](t) = \mathcal{E}[u^N,\phi^N](t) +
 \epsilon \big\langle (-\Delta)u^N,u^N\big\rangle_V,
\end{equation}
so that:
\begin{equation*}
 \begin{split}
 \mathcal{E}_\epsilon[\phi^N,u^N](t) = \int_{\mathbb{R}^3} & |u_t^N|^2 + |\nabla^3u^N_t|^2  +
2W(\phi^N,\nabla u^N) +\epsilon |\nabla u^N|^2 \\ & 
+ \sum_{i,j,k=1..3} D^2W(\phi^N,\nabla u^n) : (\phi^N_{x_i, x_j,
  x_k}, \nabla u^N_{x_i, x_j, x_k})^{\otimes 2} + \epsilon |\nabla u^N_{x_i, x_j, x_k}|^2\\
& + 2 \sum_{i,j,k=1..3} \mathcal{R}^N_{ijk}\phi^N_{x_i, x_j, x_k} ~\mathrm{d}x.
\end{split}
\end{equation*}
Following the proof of Lemmas \ref{gaga5} and \ref{lemapriori}, we find the counterpart
of the inequality  (\ref{ventinove}):
\begin{equation}\label{e8}
 \mathcal{E}_\epsilon[u^N, \phi^N](t) \leq C\int_0^t
 \mathcal{Z}^{3/2}[u^N, \phi^N](s)~\mbox{d}s + \mathcal{E}_\epsilon(0), 
\end{equation}
where the constant $C$ is independent from $\epsilon$, and where:
$$ \mathcal{Z}[u^N, \phi^N] (t) = \|u^N_t\|^2_{H^{3}(\mathbb{R}^3)} +
\|\nabla u^N - \mathrm{Id}\|^2_{H^{3}(\mathbb{R}^3)}  + \|\phi^N\|^2_{H^{3}(\mathbb{R}^3)}.  $$
Note that in order to obtain (\ref{e8}) we use only the
 equivalent formulations of (\ref{e6}) above, hence indeed all the
 steps from the proof of Lemma \ref{lemapriori} are valid with
 universal constants. Since the initial data in (\ref{e5}) consists of projections
 of the original data, their norms are uniformly controlled as well.

\medskip

{\bf 3.} We now consider the equivalence of $\mathcal{E}_\epsilon$
with $\mathcal{Z}$. Since for small $\mathcal{Z}$ one has:
$$\int \mathcal{R}^N_{ijk}\phi^N_{x_i, x_j, x_k} ~\mathrm{d}x \leq C
\mathcal{Z}^{3/2}[u^N, \phi^N],$$ we easily see that:
\begin{equation}\label{e8a}
  \mathcal{E}_\epsilon[u^N, \phi^N] \leq C\mathcal{Z}[u^N, \phi^N].
\end{equation}
On the other hand, in view of (\ref{e7}):
\begin{equation}\label{e9}
 \mathcal{E}_\epsilon[u^N, \phi^N] \geq c_\epsilon (\mathcal{Z} -
 C_\epsilon \mathcal{Z}^{3/2}) \geq c_\epsilon \mathcal{Z}[u^N,
 \phi^N], 
\end{equation}
where by $c_\epsilon, C_\epsilon$ we denote positive constants independent of $N$ but
depending on $\epsilon$. By (\ref{e8}) we now arrive at:
\begin{equation*}
 \mathcal{Z}[u^N, \phi^N](t)\leq C_\epsilon \int_0^t
 \mathcal{Z}^{3/2}[u^N, \phi^N](s)~\mbox{d}s + C_\epsilon
 \mathcal{E}_\epsilon(0).
\end{equation*}
Consequently, for $t_{0,\epsilon}$ sufficiently small, we have:
\begin{equation*}
 \sup_{t\leq t_{0,\epsilon}}  \mathcal{Z}[u^N, \phi^N](t) \leq
 C_\epsilon \mathcal{E}_\epsilon(0).
\end{equation*}

The above estimates, in particular (\ref{e8a}) and (\ref{e9}) imply 
the uniform in $N$ boundedness of the following quantities, on their common
interval of existence $[0, T_\epsilon]$:
\begin{equation}\label{e10}
 \begin{split}
&  u^N - \mathrm{id}  \in L^\infty(0,T_\epsilon;H^4(\mathbb{R}^3)),
\qquad u^N_t \in L^\infty(0,T_\epsilon;H^3(\mathbb{R}^3)),\\ 
& \phi^N \in L^\infty(0,T_\epsilon;H^3(\mathbb{R}^3)),\qquad \phi_t^N \in L^2(0,T_\epsilon;H^2(\mathbb{R}^3)),
 \end{split}
\end{equation}
yielding the weak-$*$ convergence in $L^\infty$ as $N\to\infty$ (up to a
subsequence), of the quantities: $u^N - \mathrm{id}$, $u_t^N$,
$\phi^N$, $\phi_t^N$ to the limiting quantities: $u^\epsilon - \mathrm{id}$, $u_t^\epsilon$,
$\phi^\epsilon$, $\phi_t^\epsilon$. Additionally, passing if necessary
to a further subsequence and invoking a diagonal argument, we may also assure that:
$$ \nabla u^N \to \nabla u^\epsilon \quad \mbox{ and } \quad \phi^N
\to \phi^\epsilon \qquad \mbox{point-wise in } \mathbb{R}^3. $$
The Sobolev compact embedding: $ H^1(0,T_\epsilon; H^2(B(R)))
\hookrightarrow \mathcal{C}^\alpha((0,T_\epsilon)\times B(R))$,
valid on any ball $B(R)\subset\mathbb{R}^3$, justifies now that
$\phi^\epsilon \in \mathcal{C}^\alpha((0,T_\epsilon)\times B(R))$. Thus, in particular:
\begin{equation}
 (\partial_\phi W,~ \partial_FW)(\phi^N,\nabla u^N)
 \to (\partial_\phi W,~ \partial_FW)(\phi^\epsilon,\nabla u^\epsilon)
 \quad \mbox{ as } N\to\infty.
\end{equation}

It follows that $(\phi^\epsilon, u^\epsilon)$ is a distributional solution to
(\ref{approx}) (\ref{initial1} - \ref{initial2}). By (\ref{e10}) we obtain the desired
regularity, completing the proof of  Lemma \ref{th3.1}.
\end{proof}

\bigskip 

\noindent {\bf Proof of Theorem \ref{th1} (existence part).}
Let $\phi^\epsilon, u^\epsilon$ be as in Lemma \ref{th3.1}. We first
observe that a common interval of existence of $(\phi^\epsilon, u^\epsilon)$ can be
taken as $[0,T]$ with $T$ prescribed by Theorem \ref{th1}.  This follows through repeating the estimates in section
\ref{abd}, dealing with estimates of the first and the third order
separately, and noting that the $\epsilon$-term appears 
exclusively in $\mathcal{E}_\epsilon$ with a ``good'' sign. Consequently:
\begin{equation}\label{e13}
 \sup_{t\leq t_0} \mathcal{Z}[u^\epsilon, \phi^\epsilon]\leq C(t_0,
 \mbox{initial data}),
\end{equation}
and we see that indeed the solutions $\phi^\epsilon, u^\epsilon$ can be
extended over appropriate  $[0,T]$ with the quantities in (\ref{oldbounds})
enjoying common bounds, independent of $\epsilon$.

The same argument as in the last part of the proof of Lemma
\ref{th3.1} implies now that the weak-$*$ limit (up to a subsequence) of
$(\phi^\epsilon, u^\epsilon)$ yield the desired regular solution $(\phi, u)$ to the original problem
(\ref{maineq}) (\ref{initial1} - \ref{initial2}). Condition (\ref{nonin}) is
automatically satisfied because of the smallness of initial data.
\endproof

\section{Proof of Theorem \ref{th1}: Uniqueness of solutions to (\ref{maineq}).}\label{sec4}

Let $(\phi, u)$ and $(\bar \phi, \bar u)$ be two solutions 
%(constructed in section \ref{sec3})
to (\ref{maineq}) with the same initial data. Define:
\begin{equation*}\label{u1}
 (\delta \phi) = \phi - \bar \phi, \qquad (\delta u) = u - \bar u,
\end{equation*}
and observe that:
\begin{equation}\label{u2}
\begin{split}
  & \d_FW(\phi,\nabla u) -  \d_FW(\bar \phi,\nabla \bar u) \\
  & = ~  \d^2_FW(\bar \phi,\nabla \bar u) : \nabla (\delta u) +
 \d_\phi\d_F W (\bar \phi,\nabla \bar \phi) (\delta \phi)  +
D^2\d_F W(\tilde \phi, \nabla \tilde u) : \big((\delta \phi),\nabla (\delta
u)\big)^{\otimes 2}, \\ 
& \d_\phi W(\phi,\nabla u) - \d_\phi W(\bar \phi,\nabla \bar u) \\
&  = ~ \d_\phi^2 W(\bar \phi,\nabla \bar u) (\delta \phi) + \d_\phi\d_F
W(\bar \phi,\nabla \bar u) : \nabla (\delta u) +
D^2 \d_\phi W(\tilde \phi, \nabla \tilde u) : \big((\delta \phi),\nabla (\delta
u)\big)^{\otimes 2}, 
% + \mathcal{O}(|\delta\phi|^3, |\delta u|^3).
\end{split}
\end{equation}
where $\tilde \phi$ and $\tilde u$ are suitable linear combinations of
$\phi,\bar \phi$ and $u,\bar u$, given by the application of the Taylor formula. 
Subtracting equations (\ref{maineq}) for $(\phi,u)$ and $(\bar
\phi,\bar u)$ and using (\ref{u2}), it follows that:
\begin{equation*}\label{u4}
\begin{split} 
& (\delta u)_{tt} - \div \Big(\d^2_FW(\bar \phi,\nabla \bar u) : \nabla
(\delta u) + \d_\phi\d_F W (\bar \phi,\nabla \bar u) (\delta
\phi)\Big) \\ & \qquad\qquad\qquad\qquad \qquad = 
\div \Big( D^2\d_F W(\tilde \phi, \nabla \tilde u) : \big((\delta \phi),\nabla (\delta
u)\big)^{\otimes 2}\Big), \\ 
& (\delta \phi)_t - \Delta \Big( \d_\phi^2 W(\bar \phi,\nabla \bar u) (\delta
 \phi) + \d_\phi\d_F W(\bar \phi,\nabla \bar u) : \nabla (\delta u) \Big) \\
& \qquad\qquad\qquad\qquad \qquad 
= \Delta \Big( D^2 \d_\phi W(\tilde \phi, \nabla \tilde u) : \big((\delta
\phi),\nabla (\delta u)\big)^{\otimes 2} \Big).
\end{split}
\end{equation*}
We now test the first equation above by $(\delta u)_t$, while the
second equation by $(-\Delta)^{-1} (\delta \phi)_t$, to obtain:
\begin{equation}\label{u6}
\begin{split}
& \frac{1}{2} \int_{\mathbb{R}^3} |\delta u_t|^2 + \d_F^2 W(\bar \phi,
\nabla \bar u) : \big (\nabla (\delta u)\big )^{\otimes 2} ~\mbox{d}x +
\int_{\mathbb{R}^3} \d_\phi\d_F W(\bar \phi,\nabla \bar u) :
(\delta \phi) \nabla (\delta u)_t ~\mbox{d}x \\ &
\qquad\qquad\qquad 
= \int_{\mathbb{R}^3}  D^2\d_F W(\tilde \phi, \nabla \tilde u) : \Big( \big((\delta \phi),\nabla
(\delta u)\big)^{\otimes 2}\otimes \nabla (\delta u)_t\Big) ~\mbox{d}x, \\
& \int_{\mathbb{R}^3} |\nabla (\delta \phi)_t|^2 ~\mbox{d}x + 
\int_{\mathbb{R}^3} \Big( \d_\phi^2 W(\bar \phi,\nabla \bar u) (\delta \phi) + \d_\phi\d_F
W(\bar \phi,\nabla \bar u) : \nabla (\delta u) \Big)(\delta \phi)_t  ~\mbox{d}x \\ &
\qquad\qquad\qquad 
= \int_{\mathbb{R}^3}  D^2 \d_\phi W(\tilde \phi, \nabla \tilde u)  : \big((\delta \phi),\nabla
(\delta u)\big)^{\otimes 2} (\delta \phi)_t ~\mbox{d}x. 
\end{split}
\end{equation}
Consequently:
\begin{equation}\label{u8}
\begin{split}
& \frac 12 \frac{\mbox{d}}{\mbox{d}t} \int_{\mathbb{R}^3}  \d_\phi ^2 W(\bar \phi,\nabla \bar u) (\delta
\phi)^{2}~\mbox{d}x  + \int_{\mathbb{R}^3}  \d_\phi\d_F W(\bar \phi,\nabla \bar
u): (\delta \phi_t)\nabla (\delta u) ~\mbox{d}x \\ &
\qquad\qquad\qquad\qquad\qquad\qquad \qquad
~~ ~~ + \int_{\mathbb{R}^3}
|\nabla (\delta \phi)_t|^2 ~\mbox{d}x \\ & 
=  \int_{\mathbb{R}^3}  D^2 \d_\phi W(\tilde \phi, \nabla \tilde u)  : \big((\delta \phi),\nabla
(\delta u)\big)^{\otimes 2} (\delta \phi)_t ~\mbox{d}x \\ &
\qquad\qquad\qquad\qquad\qquad\qquad \qquad
~~ ~~ + \int_{\mathbb{R}^3} \d_t \Big(\d_\phi^2 W(\bar
\phi,\nabla \bar u)\Big) (\delta \phi)^{2} ~\mbox{d}x. 
\end{split}
\end{equation}

Adding (\ref{u6}) and (\ref{u8}), we arrive at:
\begin{equation*}
\begin{split}
& \frac{\mbox{d}}{\mbox{d}t}\frac{1}{2} \int_{\mathbb{R}^3}   |(\delta u)_t|^2 + \d_F^2
W(\bar \phi, \nabla \bar u) : \big(\nabla (\delta u)\big)^{\otimes 2} + 
\d_\phi^2 W(\bar \phi,\nabla \bar u) (\delta \phi)^{2} \\ &
\qquad\qquad\qquad\qquad \qquad \qquad\qquad\qquad\qquad \quad 
+ 2\d_\phi\d_F W(\bar \phi,\nabla \bar u) : (\delta\phi)\nabla (\delta u) ~\mbox{d}x \\ 
& \leq C \|(\delta \phi)_t, \phi_t,\bar
\phi_t\|_{L_\infty(\mathbb{R}^3)}(t) \cdot \sup_t \|(\delta \phi),\nabla
(\delta u)\|_{L^2(\mathbb{R}^3)}^2(t), 
\end{split}
\end{equation*}
which implies that:
\begin{equation}\label{u10}
\begin{split}
& \sup_t \int_{\mathbb{R}^3}   |\delta u_t|^2 + D^2 W(\bar
 \phi, \nabla \bar u) : \big((\delta \phi), \nabla (\delta u)\big)^{\otimes 2}
 ~\mbox{d}x  \leq  \\ & \qquad\qquad\qquad
C \sup_t \|(\delta \phi),\nabla (\delta u)\|_{L^2}^2 \int_0^t
\|(\delta \phi)_t,\phi_t,\bar \phi_t\|_{L_\infty}(s) ~\mbox{d}s. 
\end{split}
\end{equation}
As before, assumptions on $W$ guarantee that the left hand side in (\ref{u10})
bounds from above the quantity: $\sup_t \|(\delta \phi),\nabla (\delta
u)\|_{L^2}^2$. Since the integral quantity above is small for $t\ll 1$, it
follows by (\ref{u10}) that $(\delta \phi)$ and $\nabla (\delta  u)$
are zero.
\endproof

\section{Proof of Theorem \ref{th2}: The elliptic-parabolic problem (\ref{maineq2}).}\label{sec5}

As in section \ref{sec3}, we first derive an a priori estimate for solutions of 
(\ref{maineq2}), whose existence will follow then via Galerkin's method, in the same 
manner as for the system (\ref{maineq}).
%In general, the present case is simpler than that of (\ref{maineq}). 

\begin{lemma} \label{aprio}
Assume that $(\phi, u)$ is a sufficiently smooth solution to
(\ref{maineq2})  which remains in a vicinity of
$(0, \mathrm{id})$ for all $t\geq 0$, in the sense that:
$$ \Xi[\phi,\nabla u - \mathrm{Id}]   := \sup_{t\geq 0}( \|\phi\|_{H^2(\mathbb{R}^3)}^2 
+\|\nabla u -Id\|_{H^2}) + c \int_0^\infty
\|\nabla \phi,\nabla^2 u\|_{H^2(\mathbb{R}^3)}^2 ~\mathrm{d}t \ll 1$$
 Then: 
\begin{equation*}
\begin{split}
 \sup_{t\geq 0} \big(\|\nabla u(t) -\mathrm{Id}\|^2_{H^2} + \|u(t) -
 \mathrm{id}\|^2_{L^6}\big) + \Xi[\phi,\nabla u - \mathrm{Id}] \leq 
C\|\phi_0\|^2_{H^2(\mathrm{R}^3)}.
\end{split}
\end{equation*}
\end{lemma}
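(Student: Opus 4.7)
The plan is to build an $H^2$-level energy--dissipation inequality for $\phi$ that mirrors the cancellation identity of Lemma \ref{gaga5} at higher orders, and to combine it with elliptic regularity for the stationary equation, which at each fixed time expresses $\nabla u - \mathrm{Id}$ in terms of $\phi$. Testing the parabolic equation in (\ref{maineq2}) against $\partial_\phi W(\phi,\nabla u)$ and the elliptic equation against $u_t$ gives, after the same cancellation as in Lemma \ref{gaga5},
\[
\frac{d}{dt}\int_{\R^3} W(\phi,\nabla u)\,\mathrm{d}x + \int_{\R^3}|\nabla\partial_\phi W(\phi,\nabla u)|^2\,\mathrm{d}x = 0,
\]
which by (\ref{ass}) controls $\|\phi\|_{L^2}^2 + \|\mathrm{sym}(\nabla u-\mathrm{Id})\|_{L^2}^2$ and, modulo absorbable quadratic errors, dissipates $\|\nabla\phi\|_{L^2}^2 + \|\nabla^2 u\|_{L^2}^2$.

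For the elliptic regularity step, I linearise the first equation of (\ref{maineq2}) at $(0,\mathrm{Id})$ to obtain
\[
-\mathrm{div}\bigl(\partial_F^2 W(0,\mathrm{Id}):\nabla(u-\mathrm{id})\bigr) = \mathrm{div}\bigl(\partial_F\partial_\phi W(0,\mathrm{Id})\,\phi + N(\phi,\nabla u-\mathrm{Id})\bigr),
\]
with $N$ smooth and vanishing to second order. By (\ref{ass}) and Korn's inequality, the leading operator is Legendre--Hadamard elliptic on $\R^3$; classical estimates together with $\Xi\ll 1$ to absorb the quadratic remainder give, for $k\in\{0,1,2\}$, the bound $\|\nabla u - \mathrm{Id}\|_{H^k}\leq C\|\phi\|_{H^k}$. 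The Sobolev embedding $\dot H^1(\R^3)\hookrightarrow L^6(\R^3)$ then yields $\|u-\mathrm{id}\|_{L^6}\leq C\|\phi\|_{L^2}$.

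For every multi-index $\alpha$ with $|\alpha|\leq 2$, I differentiate both equations in (\ref{maineq2}) by $\partial^\alpha$, differentiate the elliptic relation also in $t$, and then test the parabolic equation against $\partial^\alpha\partial_\phi W$ and the time-differentiated elliptic equation against $\partial^\alpha u_t$. Expanding
\[
\partial^\alpha\partial_\phi W = \partial^2_\phi W\,\partial^\alpha\phi + \partial_F\partial_\phi W:\partial^\alpha\nabla u + R^\alpha, \quad \partial^\alpha\partial_F W = \partial^2_F W:\partial^\alpha\nabla u + \partial_\phi\partial_F W\,\partial^\alpha\phi + S^\alpha,
\]
the mixed terms $\int\partial_F\partial_\phi W\,(\partial^\alpha\phi_t)(\partial^\alpha\nabla u)$ and $\int\partial_F\partial_\phi W\,(\partial^\alpha\phi)(\partial^\alpha\nabla u_t)$ combine through the product rule into a perfect time derivative of $\int\partial_F\partial_\phi W\,(\partial^\alpha\phi)(\partial^\alpha\nabla u)$, exactly as with the boxed terms in (\ref{dodici})--(\ref{dicianove}). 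The net outcome is
\[
\frac{d}{dt}\int_{\R^3}\tfrac12\,D^2 W(\phi,\nabla u):(\partial^\alpha\phi,\partial^\alpha\nabla u)^{\otimes 2}\,\mathrm{d}x + \int_{\R^3}|\nabla V^\alpha|^2\,\mathrm{d}x = \mathrm{LOT},
\]
with $V^\alpha$ the linear leading part of $\partial^\alpha\partial_\phi W$ and $\mathrm{LOT}$ collecting $R^\alpha, S^\alpha$ and time derivatives of coefficients.

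Summing the previous identity over $|\alpha|\leq 2$ and adding the zeroth-order identity, I apply (\ref{ass}) and Korn's inequality to identify the accumulated quadratic form with an equivalent of $\mathcal{F}(t) := \|\phi(t)\|_{H^2}^2+\|\nabla u(t)-\mathrm{Id}\|_{H^2}^2$; the remainders in $\mathrm{LOT}$ are estimated by Sobolev embeddings and interpolation along the lines of Step 3 in Lemma \ref{lemapriori}, leading to
\[
\frac{d}{dt}\mathcal{F}(t) + c\,\|\nabla\phi,\nabla^2 u\|_{H^2}^2 \leq C\,\Xi^{1/2}\,\|\nabla\phi,\nabla^2 u\|_{H^2}^2.
\]
For $\Xi\ll 1$ the right-hand side is absorbed; integrating over $(0,\infty)$ and using the elliptic step at $t=0$ to majorise $\mathcal{F}(0)\leq C\|\phi_0\|_{H^2}^2$ finishes the proof. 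The main obstacle is that, unlike in the hyperbolic setting of Lemma \ref{lemapriori}, no $u_t$-energy is available here, so every $\partial^\alpha u_t$ appearing in the estimates must be routed through the time-differentiated elliptic relation and re-coupled to $\phi_t$ via the elliptic regularity step; verifying that the induced commutators $R^\alpha, S^\alpha$ are all controlled by $\Xi^{1/2}$ times the dissipation norm is the main bookkeeping challenge.
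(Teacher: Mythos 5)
Your overall strategy (energy identities exploiting the special coupling, elliptic regularity for the stationary equation, absorption by smallness of $\Xi$) matches the paper's, and your elliptic-regularity step and the $L^6$ bound on $u-\mathrm{id}$ are essentially the paper's Steps 1 and 4. However, your choice of multipliers creates two genuine gaps.

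First, the coercivity of your claimed dissipation is unjustified. Both at order zero and for $|\alpha|\leq 2$ you place the quadratic form $D^2W:(\partial^\alpha\phi,\partial^\alpha\nabla u)^{\otimes 2}$ under the time derivative and claim that the remaining dissipation $\int|\nabla V^\alpha|^2$, with $V^\alpha=\partial_\phi^2W\,\partial^\alpha\phi+\partial_F\partial_\phi W:\partial^\alpha\nabla u$, controls $\|\partial^\alpha\nabla\phi\|_{L^2}^2+\|\partial^\alpha\nabla^2u\|_{L^2}^2$. But $V^\alpha$ is a single scalar contraction of $(\partial^\alpha\phi,\partial^\alpha\nabla u)$; its gradient cannot control both pieces separately, since (\ref{ass}) asserts positivity of the full bilinear form $D^2W$, not of this particular combination. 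To rescue this one would have to eliminate $\partial^\alpha\nabla^2u$ via the differentiated elliptic equation and prove positivity of the resulting Schur complement of $D^2W(0,\mathrm{Id})$ with respect to the $F$-block (delicate, because $\partial_F^2W(0,\mathrm{Id})$ is coercive only on symmetric matrices); none of this appears in your argument. The paper avoids the issue entirely by testing the $\partial^\alpha$-differentiated elliptic equation against $\partial^\alpha u$ and the differentiated parabolic equation against $(-\Delta)^{-1}\partial^\alpha\phi$: then the full form $D^2W:(\partial^\alpha\phi,\partial^\alpha\nabla u)^{\otimes 2}$ lands in the \emph{dissipation} slot (see (\ref{o4}), (\ref{o7}), (\ref{o10})), which together with (\ref{ass}), Korn's inequality and (\ref{o12}) yields precisely the time-integrated part of $\Xi$, while the quantity under $\frac{\mathrm{d}}{\mathrm{d}t}$ is the negative-order norm $\|\nabla(-\Delta)^{-1}\partial^\alpha\phi\|_{L^2}^2$, which after summation recovers $\sup_t\|\phi\|_{H^2}^2$.

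Second, your scheme injects $u_t$ into the higher-order estimates (you test the time-differentiated elliptic equation against $\partial^\alpha u_t$), and you yourself defer the control of the resulting commutators to an unverified ``bookkeeping challenge''. Since $\Xi$ contains no norm of $u_t$ or $\phi_t$, and $\phi_t=\Delta\partial_\phi W$ costs two derivatives (so it is at best square-integrable in time through the dissipation, never bounded), this is not mere bookkeeping: it is exactly the step that may fail. The paper's multipliers are time-independent, so no time derivative of $u$ or $\phi$ ever enters the higher-order identities, and the remainders are purely spatial, bounded by $\int_0^\infty\|\nabla\phi\|_{H^2}^4+\|\nabla\phi\|_{H^2}^6~\mathrm{d}t\leq C(\Xi^2+\Xi^3)$ after using (\ref{o12}).
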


\begin{proof}
{\bf 1.} 
Observe first the following elementary fact:
\begin{equation}\label{o12}
\|\nabla^2 u(t)\|_{H^1(\mathbb{R}^3)} \leq C \|\nabla \phi(t)\|_{H^1(\mathbb{R}^3)}.
\end{equation}
To see (\ref{o12}), consider the first equation in $(\ref{maineq2})$:
\begin{equation}\label{o13}
 \partial^2_{F}W(\phi,\nabla u) : \nabla u_{x_i} =
- \partial_{\phi}\partial_{F}W(\phi,\nabla u)\phi_{x_i} \qquad i=1\ldots 3. 
\end{equation}
Condition (\ref{ass}) and Korn's inequality imply that the system
(\ref{o13}) is elliptic, hence its solutions (normalised so that  $\nabla
u - \mbox{Id} \in L^6(\mathbb{R}^3)$) obey:
\begin{equation*}\label{o14}
 \|\nabla^2 u\|_{L^p(\mathbb{R}^3)} \leq C\|\nabla \phi\|_{L^p(\mathbb{R}^3)} \qquad p=2,4.
\end{equation*}
Differentiating (\ref{o13}) with respect to $x$ leads further to:
$$\|\nabla^3 u\|_{L^2} \leq C(\|\nabla^2 \phi\|_{L^2} + \|\nabla
\phi,\nabla^2 u\|_{L^4}^2)\leq C(\|\nabla^2 \phi\|_{L^2} + \|\nabla
\phi\|_{L^4}^2) \leq C(\|\nabla^2 \phi\|_{L^2} + \|\nabla
\phi\|_{H^1}^2),$$
proving (\ref{o12}) in view of the assumption in the Lemma.

We also observe the resulting control of pointwise smallness of  
$\phi$ and $(\nabla u - \mbox{Id})$, by the Sobolev embedding:
\begin{equation}\label{o23}
\sup_{t\geq 0} \|\phi, \nabla u - \mbox{Id}\|_{L^\infty} \leq C \sup_{t\geq 0}
\|\phi, \nabla u - \mbox{Id}\|_{H^2} \leq C\Xi^{1/2} \ll 1.
\end{equation}

\medskip

{\bf 2.}  
Testing the first equation in (\ref{maineq2}) by $u_t$ and the second one
 by $\psi_t = (-\Delta)^{-1} \phi_t$, we obtain the  energy estimate, as in Lemma \ref{gaga5}: 
\begin{equation*}
 \frac{\mbox{d}}{~\mbox{d}t}\int_{\mathbb{R}^3} W(\phi,\nabla u) ~\mbox{d}x + \int_{\mathbb{R}^3}
 |\nabla (-\Delta)^{-1} \phi_t|^2~\mbox{d}x =0. 
\end{equation*}
To derive the second energy estimate we proceed slightly
differently. Differentiating (\ref{maineq2}) in a spatial direction
$x_i\in \{x_1,x_2,x_3$\}, we get:
\begin{equation}\label{o3}
\begin{split}
& \div \big(\partial_{F}^2W(\phi,\nabla u):\nabla u_{x_i}
  +\partial_{F}\partial_\phi W(\phi,\nabla u)\phi_{x_i}\big)=0, \vspace{1mm}\\
&\phi_{x_i, t}=\Delta \big(\partial_{\phi}\partial_{F}W(\phi, \nabla
u) : \nabla u_{x_i} + \partial^2_{\phi}W(\phi,\nabla u)\phi_{x_i}\big).
\end{split}
\end{equation}
Now, testing the first equation above by $u_{x_i}$, testing the second one by
$\psi_{x_i} = (-\Delta)^{-1} \phi_{x_i}$, and summing up the results, yields:
\begin{equation}\label{o4}
\frac{1}{2} \frac{\mbox{d}}{~\mbox{d}t} \int_{\mathbb{R}^3} |\nabla(-\Delta)^{-1}
\phi_{x_i}|^2 ~\mbox{d}x + \int_{\mathbb{R}^3} D^2W(\phi,\nabla u) : (\phi_{x_i},\nabla
u_{x_i})^{\otimes 2} ~\mbox{d}x=0 .
\end{equation}
Consequently, thanks to (\ref{ass}), the strict convexity of $W$ implies:
$$\frac{1}{2}\frac{\mbox{d}}{\mbox{d}t} \int_{\R^3}|\nabla^2\psi|^2 +
  \frac{\gamma}{2}\int_{\R^3} \big(|\nabla\phi|^2 + \sum_{i=1}^3 | (\mbox{sym}\nabla
  u_{x_i})|^2\big)\leq 0.$$ 
Using Korn's inequality and integrating in time we see that:
\begin{equation}\label{o4a}
 \sup_{t>0} \int_{\R^3} \phi^2 ~\mbox{d}x + c \int_0^\infty \int_{\R^3}
 (|\nabla \phi|^2 + |\nabla^2 u|^2) ~\mbox{d}x \mbox{d}t\leq
 C\|\phi_0\|^2_{L^2(\mathbb{R}^3)}. 
\end{equation}

\medskip

{\bf 3.} We now differentiate (\ref{o3}) in a spatial direction $x_j \in \{x_1,x_2,x_3\} $, getting:
\begin{equation}\label{o5}
 \begin{array}{l}
  \div \big(\d_{F}^2W(\phi,\nabla u) : \nabla u_{x_i,x_j}
  +\d_{\phi}\d_FW(\phi,\nabla u) \phi_{x_i,x_j}\big)=\div \mathcal{R}_1, \\[5pt]
\phi_{x_i,x_j,t} - \Delta (\partial^2_{\phi}W(\phi,\nabla
u)\phi_{x_i,x_j} + \partial_{\phi}\partial_F W(\phi,\nabla u) : \nabla
u_{x_i,x_j}\big)=\Delta \mathcal{R}_2,
 \end{array}
\end{equation}
where the error terms $\mathcal{R}_1$ and $\mathcal{R}_1$ have the
following structure (we suppress the distinction between different $x_i, x_j$):
\begin{equation}\label{o6}
 \mathcal{R}_1 , \mathcal{R}_2 \sim D^3W(\phi,\nabla u) : \big(
 (\nabla u_x)^{\otimes 2} + (\nabla u_x) \phi_x + (\phi_x)^2\big).
\end{equation}
Integrating (\ref{o5}) by parts against $u_{x_i, x_j}$ and $(-\Delta)^{-1}\phi_{x_i, x_j}$, respectively, it follows that:
\begin{equation}\label{o7}
\begin{split}
\frac{\mbox{d}}{~\mbox{d}t} \int_{\R^3} |\nabla
 \psi_{x_i,x_j}|^2 ~\mbox{d}x ~  + c\int_{\R^3} D^2 W(\phi,\nabla u)&
 : (\phi_{x_i,x_j},\nabla u_{x_i,x_j})^{\otimes 2} ~\mbox{d}x \\ &
 \qquad  \leq C \int_{\R^3} |\nabla^2 u|^4 + |\nabla \phi|^4 ~\mbox{d}x,
\end{split}
\end{equation}
because of (\ref{o6}) and:
\begin{equation*}
\begin{split}
 |\int_{\R^3} (\div \mathcal{R}_1) u_{x_i,x_j} ~\mbox{d}x| + &
 |\int_{\R^3} (\Delta \mathcal{R}_2) (-\Delta)^{-1} \phi_{x_i,x_j}
 ~\mbox{d}x| \\ & \qquad 
\leq \epsilon \| \nabla u_{x_i,x_j}, \phi_{x_i,x_j}\|_{L^2(\mathbb{R}^3)}^2 +
C\|\mathcal{R}_1,\mathcal{R}_2\|^2_{L^2(\mathbb{R}^3)}. 
\end{split}
\end{equation*}
Differentiating (\ref{o5}) further, we obtain:
\begin{equation*}
\begin{array}{l}
\div \big(\partial_{F}^2W(\phi,\nabla u) : \nabla u_{x_i,x_j,x_k}
  + \partial_{\phi}\partial_FW(\phi,\nabla u)\phi_{x_i,x_j,x_k}\big) = \div \mathcal{R}_3, \\[5pt] 
\phi_{x_i,x_j,x_k,t} - \Delta \big(\partial^2_{\phi}W(\phi,\nabla
u)\phi_{x_i,x_j,x_k} + \partial_{\phi}\partial_F W(\phi,\nabla u) : \nabla
u_{x_i,x_j,x_k}\big)=\Delta \mathcal{R}_4, 
 \end{array}
\end{equation*}
where, as before:
\begin{multline*}
\mathcal{R}_3 , \mathcal{R}_4 \sim D^3W(\phi,\nabla u) : \big(
\nabla u_{xx} \otimes \nabla u_x + \nabla u_{xx}\phi_x +  \nabla u_x\phi_{xx} +\phi_x\phi_{xx}\big)\\
+  D^4W(\phi,\nabla u) : \big(
(\nabla u_x)^{\otimes 3} + (\nabla u_x)^{\otimes 2}\phi_x + \nabla
u_{x}(\phi_x)^2 + \nabla u_x(\phi_{x})^2 + (\phi_x)^2\big).
\end{multline*}
Testing by $u_{x_i,x_j,x_k}$ and $\psi_{x_i,x_j,x_k}$, we find:
\begin{equation}\label{o10}
\begin{split}
\frac{\mbox{d}}{~\mbox{d}t} \int_{\R^3} |\nabla \psi_{x_i,x_j,x_k}|^2 &~\mbox{d}x + 
c\int_{\R^3} D^2 W(\phi,\nabla u) : (\phi_{x_i,x_j,x_k},\nabla
u_{x_i,x_j,x_k})^{\otimes 2} ~\mbox{d}x \\ & \leq
C \int_{\R^3} |\nabla^3 u|^4 + |\nabla^2 u|^4 + |\nabla\phi|^4 + |\nabla^2\phi|^4 +  |\nabla^2 u|^6 
+ |\nabla\phi|^6 ~\mbox{d}x. 
\end{split}
\end{equation}
Summing (\ref{o4}), (\ref{o7}), (\ref{o10}), integrating the
result in time in the same manner as in (\ref{o4a}), and recalling
(\ref{o12}), we obtain:
\begin{equation}\label{o11}
\begin{split}
 \Xi[\phi,\nabla u - \mbox{Id}]   & \leq 
C \int_0^\infty \int_{\mathbb{R}^3} |\nabla^2 u|^4 + |\nabla
\phi|^4 + |\nabla^2 u|^6 + |\nabla \phi|^6  
 \\ & \qquad \qquad
 \qquad\qquad + |\nabla^3 u|^4  + |\nabla^2 u|^4 + |\nabla^2\phi
 |^4 ~\mbox{d}x \mbox{d}t + C\|\phi_0\|^2_{H^2(\mathbb{R}^3)} \\ &
\leq C \int_0^\infty \int_{\mathbb{R}^3} |\nabla \phi|^4 + |\nabla \phi|^6  
 + |\nabla^2 \phi|^4 ~\mbox{d}x \mbox{d}t + C\|\phi_0\|^2_{H^2(\mathbb{R}^3)}.
\end{split}
\end{equation}
We further have:
\begin{equation*}
\begin{split}
 \int_0^\infty \int_{\mathbb{R}^3} |\nabla &\phi|^4 + |\nabla \phi|^6  
 + |\nabla^2 \phi|^4 ~\mbox{d}x \mbox{d}t \\ & \leq C \sup_{t\geq
   0}\big(\|\nabla \phi\|_{L^\infty}^2 + \|\nabla \phi\|_{L^\infty}^4
 + \|\nabla^2 \phi\|_{L^\infty}^2\big) \int_0^\infty
 \|\nabla\phi\|_{H^1(\mathbb{R}^3)}^2~\mbox{d}t \leq C(\Xi^2  + \Xi^3)
\end{split}
\end{equation*}
Consequently, (\ref{o11}) becomes:
$ \Xi \leq C(\Xi^2+\Xi^3) + C\|\phi_0\|_{H^2}^2$.
By the assumed smallness of $\Xi[\phi,\nabla u - \mathrm{Id}]$, we see that:
\begin{equation}\label{o21}
 \Xi \leq 2C\|\phi_0\|_{H^2}^2.
\end{equation}

\medskip

{\bf 4.}  We now conclude the proof of the a-priori bound. 
Test (\ref{o13}) by $(u-\mbox{id})$ to get:
\begin{equation*}
\begin{split}
 \int_{\R^3} \d_{F}^2 W(\phi,&\nabla u) : (\nabla u - \mbox{Id})^{\otimes 2} \mbox{d}x \\
& \leq C \int_{\R^3}  |\phi||\nabla u - \mbox{Id}| +
|u-\mbox{id}|\big(|\nabla u - \mbox{Id}| + |\phi|\big) \big(|\nabla
\phi|+|\nabla^2 u|\big) ~\mbox{d}x \\ & \leq
C\|\phi\|^2_{L^2} + (\epsilon + C\Xi^{1/2}) \|\nabla u - \mbox{Id}\|^2_{L^2} 
\end{split}
\end{equation*}
Indeed, by (\ref{o23}) and (\ref{o12}):
\begin{equation*}
\begin{split}
\int_{\R^3}  
|u-\mbox{id}|\big(|\nabla u &- \mbox{Id}| + |\phi|\big) \big(|\nabla
\phi|+|\nabla^2 u|\big) ~\mbox{d}x \\ & \leq 
C\|u-\mbox{id}\|_{L^6} \|\nabla u - \mbox{Id}\|_{L^2} \|\nabla \phi,\nabla^2\phi\|_{L^3} +
C\|u-\mbox{id}\|_{L^6} \|\phi\|_{L^2} \|\nabla
\phi,\nabla^2\phi\|_{L^3}  \\ & \leq
C\|\nabla u-\mbox{Id}\|^2_{L^2} \Xi^{1/2} + 
C\|\phi\|_{L^2}^2 + C\Xi \|\nabla u-\mbox{id}\|^2_{L^2} 
\end{split}
\end{equation*}
Thus, we obtain the bound on $\|\nabla u - \mbox{Id}\|_{L^2}$, and subsequently on
$\|u-\mbox{id}\|_{L^6}$. 
\end{proof}

\bigskip 

\noindent {\bf A proof of Theorem \ref{th2}.}
Given $(\bar\phi, \bar u)$, consider the following problem which is
 the linearization of (\ref{maineq2}) at $(0,\mbox{id})$:
\begin{equation}\label{ex1}
\begin{split}
& \div \Big(\partial^2_{F}W(0,\mbox{Id}) (\nabla u -\mbox{Id})
+ \partial_{\phi}\partial_F W(0,\mbox{Id})\phi\Big) = \div A, \\
& \phi_t - \Delta \Big(\partial^2_{\phi} W(0,\mbox{Id})\phi
+ \partial_{\phi}\partial_F W(0,\mbox{Id})(\nabla u - \mbox{Id})\Big) = \Delta B,
\end{split}
\end{equation}
where:
\begin{equation}\label{ex2}
\begin{split}
& A = \partial^2_{F}W(0,\mbox{Id})(\nabla \bar u-\mbox{Id})
+ \partial_{\phi}\partial_F W(0,\mbox{Id})\bar \phi- \partial_FW(\bar \phi,\nabla \bar u), \\
& B = \partial_\phi W(\bar \phi,\nabla \bar u)-\partial^2_{\phi}W(0,\mbox{Id})\bar \phi + 
\partial_{\phi}\partial_F W(0,\mbox{Id})(\nabla \bar u - \mbox{Id}).
\end{split}
\end{equation}
Let ${\mathcal T}$ be its solution operator, so that $\mathcal{T}[\bar
\phi,\bar u] = (\phi, u)$. We will prove that $\mathcal{T}$ has a
fixed point in the space $X$, where:
\begin{equation*}
\begin{split}
X = \big\{ (\phi, u); ~~& \phi \in L^\infty(\R_+;H^2(\mathbb{R}^3)),  ~\nabla(\nabla u - \mbox{Id}) \in
 L^\infty(\R_+;H^1(\mathbb{R}^3)), \\ & \nabla \phi \in L^2(\R_+;H^2(\mathbb{R}^3))), ~\nabla(\nabla u -
 \mbox{Id}) \in L^2(\R_+;H^2(\mathbb{R}^3))) \big\}. 
\end{split}
\end{equation*}

Note first that the well-posedness of the system (\ref{ex1}) 
follows by the Galerkin method in exactly the same manner as in 
section \ref{sec3}, under the regularity of the right hand side:
\begin{equation*}
 A \in L^\infty(\R_+;H^2(\mathbb{R}^3)) \cap
 L^2(\R_+;H^3(\mathbb{R}^3)), \quad B \in L^2(\R_+;H^3(\mathbb{R}^3))
\end{equation*}
Approximative spaces are constructed for $\phi\in H^3(\R^3)$ and for $u-\mbox{id}$ such that 
$\nabla u - \mbox{Id} \in H^3(\mathbb{R}^3)$. We leave this construction
to the reader and note that it is simpler than
the one for the system (\ref{maineq}). As in the proof of Lemma
\ref{aprio}, solutions to (\ref{ex1}) then satisfy:
\begin{equation*}
\begin{split}
\sup_{t\geq 0} \int_{\R^3} \phi^2 ~\mbox{d}x  + \sum_{i} \int_0^\infty \int_{\R^3}
D^2W(0,\mbox{Id}) &: (\phi_{x_i},\nabla u_{x_i})^{\otimes 2} ~\mbox{d}x
\mbox{d}t \\ & \leq C\|\nabla A,\nabla
B\|^2_{L^2(\R_+;L^2(\R^3))} + C\|\phi_0\|^2_{L^2},
\end{split}
\end{equation*}
which is obtained by testing with $u_{x_i}$ and $(-\Delta)^{-1}\phi_{x_i}$.
Similarly, the second and third derivatives bounds eventually  yield:
\begin{equation}\label{ex3}
\begin{split}
\sup_{t\geq 0} \|\phi\|^2_{H^2(\mathbb{R}^3)} + \|\nabla \phi, \nabla(&\nabla u -
\mbox{Id})\|_{L^2(\R_+;H^2(\mathbb{R}^3))}^2 \\ & \leq C\|\nabla A,\nabla B\|_{L^2(\R_+;H^2(\mathbb{R}^3))}^2
+ C\|\phi_0\|^2_{H^2(\mathbb{R}^3))},
\end{split}
\end{equation}
while:
\begin{equation*}
\sup_{t\geq 0} \|\nabla u - \mbox{Id}\|_{H^2(\mathbb{R}^3)}^2\leq C\|A\|_{L^\infty(\R_+;H^2(\mathbb{R}^3))}^2.
\end{equation*}
Directly from (\ref{ex2}) we observe that: 
\begin{equation*}
\begin{split}
& \|A\|_{L^\infty(\R_+;H^2(\mathbb{R}^3)) \cap L^2(\R_+;H^3(\mathbb{R}^3))} \leq C~\Xi[\bar \phi,\nabla \bar u - \mbox{Id}]\\
& \|B\|_{ L^2(\R_+;H^3(\mathbb{R}^3))} \leq C~\Xi[\bar \phi,\nabla
\bar u - \mbox{Id}]
\end{split}
\end{equation*}
provided the quantity $\Xi$ is small. Then, by (\ref{ex3}):
\begin{equation*}
 \Xi[\phi,\nabla u -\mbox{Id}] \leq C \Xi[\bar \phi,\nabla \bar u - \mbox{Id}]^2 +C_0\|\phi_0\|_{H^2(\mathbb{R}^3)}^2. 
\end{equation*}
Based on the considerations from the part about the a priori bound we observe that:
%{\color{red} Pytanie: Gdzie stosuje sie Lemat \ref{ex1} w takiej formie jak jest napisany? }
\begin{equation*}
 \Xi[\phi,\nabla u - \mbox{Id}] \leq 2 C_0\|\phi_0\|^2_{H^2(\R^3)},
\end{equation*}
provided that $\Xi$ is sufficiently small.
Hence the operator $\mathcal{T}$ maps a ball  $\mathcal{B}\subset X$ with a
sufficiently small radius, into itself.
Observe further that $\mathcal{T}$ is a contraction over
$\mathcal{B}$, whose fixed point yields the unique solution to the
system (\ref{maineq2}). Theorem \ref{th2} is proved.
\endproof

\section{Appendix: Proof of Proposition \ref{prop}.}
%Non-Euclidean elasticity and LCE examples}\label{sec4}

The first condition in (\ref{ass}) is obvious. A
direct calculation shows that:
$$DW_1(\phi, F) : (\tilde\phi,\tilde F)  = \phi\tilde\phi + \langle 
DW_0(FB(\phi)) : \tilde \phi F B'(\phi)\rangle + \langle DW_0(FB(\phi))
: \tilde F B(\phi)\rangle,$$
which implies the second condition in (\ref{ass}). Further:
\begin{equation*}
\begin{split}
D^2W_1(0,\mbox{Id}) : (\tilde\phi,\tilde
F)^{\otimes 2} & = |\tilde \phi|^2
+ D^2W_0(\mbox{Id}) : (\tilde\phi B'(0))^{\otimes 2} \\ &
\qquad + 2
D^2W_0(\mbox{Id}) : (\tilde B'(0)\otimes \tilde F) + D^2W_0(\mbox{Id}) :
{\tilde F}^{\otimes 2} \\ &  = |\tilde \phi|^2 + D^2W_0(\mbox{Id}) : (\tilde F
+\tilde \phi B'(0))^{\otimes 2} \\ & \geq |\tilde\phi|^2 + c\left|\mbox{sym }
\tilde F + \tilde \phi B'(0)\right|^2,
\end{split}
\end{equation*}
where we concluded from (\ref{elastic_dens}) that $DW_0(\mbox{Id}) = 0$ and that
$D^2W_0(\mbox{Id})$ is positive definite on symmetric matrices.
We also note that: $ D^2W_2(0,\mbox{Id}) = D^2W_1(0,\mbox{Id}).$
To conclude the proof, it is hence enough to show that:
\begin{equation}\label{want}
|\tilde\phi|^2 + |\mbox{sym } \tilde F + \tilde\phi B'(0)|^2 \geq c
(|\tilde\phi|^2 + |\mbox{sym } \tilde F|^2 ),
\end{equation}
for all $\tilde\phi$ and $\tilde F$. Expanding the square in the left
hand side, dividing by $|\tilde \phi|$ and collecting terms, this is
equivalent to: 
\begin{equation*}
(1-c + |B'(0)|^2) + (1-c) |\mbox{sym} (\frac{1}{\tilde\phi}\tilde
F)|^2 + 2\langle \mbox{sym} (\frac{1}{\tilde\phi}\tilde F) :
B'(0)\rangle \geq 0,
\end{equation*}
which becomes:
\begin{equation*}
1-c -\frac{c}{1-c} |B'(0)|^2 +  |\sqrt{1-c}
\mbox{ sym} \big(\frac{1}{\tilde\phi}\tilde F\big) + \frac{1}{\sqrt{1-c}} B'(0)|^2 \geq 0. 
\end{equation*}
The above inequality follows from:  $1-c -
\frac{c}{1-c} |B'(0)|^2 >0$, which is true whenever $c>0$  is sufficiently small.
\endproof

\end{document}